\theoremstyle{plain}
  \newtheorem{theorem}{Theorem}[section] 
  \newtheorem{proposition}{Proposition}[section]
  \newtheorem{lemma}{Lemma}[section]
\theoremstyle{remark}
  \newtheorem{remark}{Remark}[section]
\theoremstyle{definition}
\begin{document}

\markboth{Hideo Takaoka}{}

\title{On the growth of Sobolev norm for the cubic NLS on two dimensional product space}
\author{Hideo Takaoka\thanks{This work was supported by JSPS KAKENHI Grant Number 18H01129.}\\
Department of Mathematics, Kobe University\\
Kobe, 657-8501, Japan\\
takaoka@math.kobe-u.ac.jp}

\date{\empty}

\maketitle

\begin{abstract}
We obtain polynomial bounds on the growth in time of Sobolev norm of solutions to the cubic defocusing nonlinear Schr\"odinger equation on two dimensional product space.
We also give the angular improved bilinear Strichartz estimates for frequency localized functions, which estimates are used for enhancement of a smoothing estimates.
Such upper bounds for the growth of Sobolev norms measure the transfer of energy from low to high modes as time grows on. 
\end{abstract}

{\it $2020$ Mathematics Subject Classification Numbers.}
35Q55, 42B37.

{\it Key Words and Phrases.}
Growth of Sobolev norms, Nonlinear Schr\"odinger equations, Bilinear Strichartz estimates.

\section{Introduction}\label{sec:introduction}
In this paper, we consider the cubic defocusing nonlinear Schr\"odinger equation on two dimensional product space
\begin{equation}\label{eq:NLS}
\begin{cases}
i\partial_t+\Delta u=|u|^2u,\\
u(0,z)=u_0(z)\in H^s(\mathbb{R}\times\mathbb{T}),
\end{cases}
\end{equation}
where $z=(x,y)\in \mathbb{R}\times \mathbb{T}$ and $\mathbb{T}=\mathbb{R}/2\pi\mathbb{Z}$.
The unknown function $u:\mathbb{R}\times\mathbb{R}\times\mathbb{T}\mapsto \mathbb{C}$ is a solution to \eqref{eq:NLS}, and obeys the integral equation
$$
u(t)= e^{it\Delta}u_0-i\int_0^te^{i(t-t')\Delta}\left[|u|^2u\right](t')\,dt'.
$$
We will focus on the defocussing nonlinearity as a simple model. 

As is known, the equation \eqref{eq:NLS} possesses at least two conservation laws; the energy conserved quantity 
\begin{equation}\label{eq:energy}
E[u](t)=\int_{\mathbb{R}\times\mathbb{T}}\left(\frac12|\nabla u(t,z)|^2+\frac14|u(t,z)|^4\right)\,dz
\end{equation}
and the mass conserved quantity
\begin{equation}\label{eq:mass}
M[u](t)=\int_{\mathbb{R}\times\mathbb{T}}|u(t,z)|^2\,dz.
\end{equation}
The conserved energy in the defocusing case is positive definite and controls the $H^1$ energy norm of solutions.  
Making use of conservation of energy \eqref{eq:energy} and mass \eqref{eq:mass} proves a priori bounds for solutions of \eqref{eq:NLS} as
\begin{equation*}\label{eq:H^1-apriori}
\sup_{t\in\mathbb{R}}\|u(t)\|_{H^1(\mathbb{R}\times\mathbb{T})}<\infty.
\end{equation*}
The aim of this paper is to develop the problem of growth in time of higher order Sobolev norms of solutions to \eqref{eq:NLS}.

There exist considerable amount of results in well-posedness literature for Euclidean space $\mathbb{R}^2$.
On the Euclidean space, one has dispersive estimates that allow us to obtain an $L^{\infty}$-estimate for the linearized solution.
One standard consequence of the dispersive estimates is the Strichartz estimates.
Such Strichartz estimates are fundamental and key tools for the analysis of the well-posedness problem.

In the periodic setting, such a dispersive estimate does not hold on one hand.
This makes the analysis of the well-posedness problem more difficult than the Euclidean setting.
In \cite{b1}, Bourgain investigated the Strichartz estimates for Schr\"odinger equations on two dimensional tori $\mathbb{T}^2$.
He proved $H^s~(s>0)$ well-posedness results for the cubic defocusing nonlinear Schr\"odinger equation on two dimensional torus $\mathbb{T}^2$ by using the $\varepsilon$-loss Strichartz estimates
\begin{equation}\label{eq:BourgainStrichartz}
\left\|e^{it\Delta}\phi_N\right\|_{L^{4}(\mathbb{T}^3)}\le C(\varepsilon) N^{\varepsilon}\|\phi_N\|_{L^2(\mathbb{T}^2)},
\end{equation}
where $\varepsilon>0$ and $\phi_N\in L^2(\mathbb{T}^2)$ with spatial frequencies $N$.
There is no way to compensate the loss of regularity in \eqref{eq:BourgainStrichartz}, in contrast with the Euclidean setting $\mathbb{R}^2$.
The upper bound on the growth in time of high Sobolev norms of global solutions on $\mathbb{T}^2$ was achieved by Bourgain \cite{b2}.
The growth of high Sobolev norms of solutions gives us quantitative estimate on the energy cascade phenomena, which captures the transfer of energy from low to high frequency modes.
He proved polynomial in time upper bounds on the Sobolev norm growth to solutions of the cubic defocusing nonlinear Schr\"odinger equation on $\mathbb{T}^2$
\begin{equation}\label{eq:bourgain-e}
\|u(t)\|_{H^s(\mathbb{T}^2)}\lesssim \langle t\rangle^{2(s-1)+}
\end{equation}
for $s>1$.
Moreover, he conjectured that if solutions exist global-in-time, the growth should be sub-polynomial in time, namely
\begin{equation*}\label{eq:bourgain-conjecture}
\|u(t)\|_{H^s(\mathbb{T}^2)}\le C(\varepsilon)\langle t\rangle^{\varepsilon}
\end{equation*}
for any $\varepsilon>0$.
The lower bounds for the growth of Sobolev norms in the cubic defocusing nonlinear Schr\"odinger equation was investigated in \cite{ckstt2}.

 \begin{remark}\label{rem:ckstt}
We give a review of the results on the lower bound of solution obtained in \cite{ckstt2}.
Let $s>1,~K\gg 1$ and $0<\delta\ll 1$ be given.
Then there exists a global smooth solution $u(t)$ to the cubic defocusing nonlinear Schr\"odinger equation on $\mathbb{T}^2$ and a time $T>0$ with
$$
\|u(0)\|_{H^s(\mathbb{T}^2)} \le \delta \quad \mathrm{and}\quad\|u(T)\|_{H^s(\mathbb{T}^2)} \ge K.
$$
This result is peculiar of the periodic setting.
In the whole space, Dodson \cite{d} proved that every solution in $L^2(\mathbb{R}^2)$ scatters and is finite in $L^2(\mathbb{R}^2)$.
\end{remark}

What happens to the problem in the product space setting $\mathbb{R}\times\mathbb{T}$ ?
In \cite{tatz}, we showed the local-in-time $L^4$-Strichartz estimates with fractional loss of derivatives
\begin{equation}\label{eq:L4Strichartz}
\left\|e^{it\Delta}\phi\right\|_{L^4([0,1]\times \mathbb{R}\times\mathbb{T})}\lesssim \|\phi\|_{L^2(\mathbb{R}\times\mathbb{T})}
\end{equation}
for $\phi\in L^2(\mathbb{R}\times\mathbb{T})$.
By \eqref{eq:L4Strichartz}, the global well-posednesss results for small data in $L^2(\mathbb{R}\times\mathbb{T})$ was established, see again \cite{tatz}.
The global well-posedness and scattering results was considered by Hani, Paisader, Tzvetokov and Visciglia \cite{hptv}.
They studied the resonance system associated to the cubic nonlinear Schr\"odinger equations on the general domain $\mathbb{R}\times\mathbb{T}^d$ for $1\le d\le 4$, in which the results on the asymptotic dynamics and the modified scattering operators for small data in weighed Sobolev spaces with certain norms were explored.
We refer to the paper \cite{cgz} by Chen, Guo and Zhao and the reference therein, where the global well-posedness and scattering results for the quintic nonlinear Schr\"odinger equation on the space $\mathbb{R}\times\mathbb{T}$. 

In light of \eqref{eq:bourgain-e}, Deng and Yang \cite{dy} obtained the bound on the growth in time of high Sobolev norms of global solutions to \eqref{eq:NLS}
\begin{equation*}\label{eq:dy}
\|u(t)\|_{H^s}\lesssim \langle t\rangle^{2(s-1)+}
\end{equation*}
for $u_0\in H^s(\mathbb{R}\times\mathbb{T}),~s>1$.
In this paper, we improve the polynomial upper bounds on the growth of solutions.
Our main results are stated as follows. 

\begin{theorem}\label{thm:growth}
Let $s>1$.
There exists a constant $C=C(\|u_0\|_{H^s})$ such that
$$
\|u(t)\|_{H^s}\le C\langle t\rangle^{\frac{s-1}{2}+}
$$
for any $t\in\mathbb{R}$ and any global-in-time solution $u(t)$ to \eqref{eq:NLS}. 
\end{theorem}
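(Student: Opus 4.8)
The plan is to control the growth of $\|u(t)\|_{H^s}$ by tracking a frequency-weighted modified energy and extracting a local-in-time increment that improves multiplicatively with the current size of the solution, producing a differential inequality whose integration yields the stated power. Since the conserved energy \eqref{eq:energy} keeps $\|u(t)\|_{H^1}$ bounded for all time, a solution with $\|u(t)\|_{H^s}\sim A$ must carry its excess regularity at frequencies $N\sim A^{1/(s-1)}$. If one can show that over a unit time interval the weighted energy increases by at most a factor $N^{-\theta}$ times its present size, then with $y(t)=\|u(t)\|_{H^s}^2$ one is led to $y'\lesssim N^{-\theta}y\sim y^{1-\theta/(2(s-1))}$, which integrates to $\|u(t)\|_{H^s}\lesssim\langle t\rangle^{(s-1)/\theta}$. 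The target exponent $(s-1)/2+$ thus corresponds to a smoothing gain $\theta=2-$, a fourfold improvement over the gain $\theta=\tfrac12$ implicit in the earlier bound $\langle t\rangle^{2(s-1)}$ of Deng and Yang.

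To realize this, I would work in the $X^{s,b}$ spaces adapted to $\mathbb{R}\times\mathbb{T}$ in which the local theory supplied by the $L^4$-Strichartz estimate \eqref{eq:L4Strichartz} is available, and build a modified energy $\mathcal{E}(t)$ from $\|u(t)\|_{H^s}^2$ together with multilinear correction terms. Differentiating $\|u(t)\|_{H^s}^2$ along the flow produces a quadrilinear form $\sum m(\xi_1,\xi_2,\xi_3,\xi_4)\,\hat u(\xi_1)\overline{\hat u(\xi_2)}\hat u(\xi_3)\overline{\hat u(\xi_4)}$ supported on $\xi_1-\xi_2+\xi_3-\xi_4=0$, whose symbol $m=\langle\xi_1\rangle^{2s}-\langle\xi_2\rangle^{2s}+\langle\xi_3\rangle^{2s}-\langle\xi_4\rangle^{2s}$ vanishes to high order on the resonant set. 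Away from exact time-resonance I would differentiate by parts in time (a normal-form reduction), absorbing the oscillation $e^{it(|\xi_1|^2-|\xi_2|^2+|\xi_3|^2-|\xi_4|^2)}$ into the correction terms and thereby trading a time derivative for a factor of the modulation; the corrections are chosen precisely to cancel the leading non-resonant contribution, while the genuinely resonant part is estimated directly. The mass law \eqref{eq:mass} together with \eqref{eq:energy} controls the remaining lower-order pieces.

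The crux is the multilinear estimate showing that both the correction terms and the resonant remainder carry a gain of $N^{-2+}$ in the top frequency. Two cancellations combine here: first, the mean-value bound for the symbol $m$ on the resonance set, which gains the frequency gap between the two highest modes; second, the \emph{angular improved bilinear Strichartz estimates} for frequency-localized data, which for $\phi_{N_1},\phi_{N_2}$ with $N_1\le N_2$ control $\|(e^{it\Delta}\phi_{N_1})(e^{it\Delta}\phi_{N_2})\|_{L^2_{t,z}}$ by a power of $N_1/N_2$ times the $L^2$ norms, sharper than the naive bound through a decomposition of the periodic $\mathbb{T}$-frequency into angular sectors exploiting transversality. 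Pairing the high–high and low–high factors through these bilinear bounds, and summing the dyadic frequency pieces against the decay furnished by $m$, should deliver the full $N^{-2+}$ smoothing and hence the value $\theta=2-$.

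The main obstacle is that on $\mathbb{R}\times\mathbb{T}$ the bilinear Strichartz estimate is strictly weaker than its Euclidean $\mathbb{R}^2$ counterpart: the periodic direction lacks dispersion, so frequency separation no longer yields the full $(N_1/N_2)^{1/2}$ gain uniformly, and the estimate degenerates exactly for frequency pairs nearly parallel in the $\mathbb{T}$-direction. Recovering enough gain to reach $\theta=2-$ is what the angular decomposition is designed to accomplish, and making the orthogonality between sectors quantitative — while keeping the number of sectors, and hence the attendant $\varepsilon$-losses, under control — is the technical heart of the argument. Once the $N^{-2+}$ increment is established, summing over consecutive unit time intervals and integrating the resulting differential inequality yields $\|u(t)\|_{H^s}\le C\langle t\rangle^{(s-1)/2+}$, which is the assertion of Theorem~\ref{thm:growth}.
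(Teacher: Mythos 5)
Your proposal follows essentially the same route as the paper: a normal-form-corrected, frequency-weighted modified energy with a resonant/non-resonant splitting, whose unit-time increment gains $N^{-2+}$ through angular refined bilinear Strichartz estimates (precisely to compensate the degeneracy of bilinear estimates in the periodic direction), iterated over unit time intervals to yield the exponent $\frac{s-1}{2}+$. The only difference is packaging: the paper implements the weighted energy via the upside-down I-operator (increasing multiplier $m(\zeta)$, almost conservation $E_I[u](\delta)\le E_I[u](0)+cN^{-2+}$ over $\sim N^{2-}$ intervals supplied by the modified local theory, then $\|u\|_{H^s}\lesssim N^{s-1}\|Iu\|_{H^1}$ with $N\sim T^{1/2+}$) rather than your literal differential inequality for $\|u(t)\|_{H^s}^2$, but the exponent arithmetic and all key ingredients coincide.
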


\begin{remark}
In contrast with the problem in $\mathbb{T}^2$, it is interesting to understand whether a growing-up solution to \eqref{eq:NLS} exists for the cubic nonlinear Schr\"odinger equation on the space $\mathbb{R}\times\mathbb{T}$.
It also remains a challenging open problem to establish the circumstances in which the situation like as Remark \ref{rem:ckstt} occurs.
\end{remark}

The proof of Theorem \ref{thm:growth} is established by the upside-down I-method, normal form reduction of the Hamiltonian, and bilinear Strichartz estimates.
The bilinear Strichartz estimates constructed here is angularly refined one, which is inspired by the previous work developed in \cite{ckstt1}.

Organization of the paper is as follows.
Section \ref{sec:notation} gives the notation used in the paper.
In Section \ref{sec:bilinear}, we continue our study of bilinear estimates associated with the linear Schr\"odinger equation.
A segmental angular analysis is used.
In Section \ref{sec:reductionenergy}, we will introduce the modified energy assigned with the cubic nonlinear Schr\"odinger equation in \eqref{eq:NLS}.
Section \ref{sec:proofTheorem} gives the proof of Theorem \ref{thm:growth}.
The idea behind the energy increment argument is that we observe the increment in the execution.

\section{Notation}\label{sec:notation}

In the paper, $\mathbb{T}=\mathbb{R}/2\pi\mathbb{Z}$.
We will use $c$ and $C$ to denote various time independent constants.
In the case that the implied constant depends on $a$, we write $c(a)$ or $C(a)$.
We use $a\lesssim b$ to denote an inequality of the form $a\le cb$.
Similarly, we write $a\sim b$ to mean $a\lesssim b$ and $a\gtrsim b$.
The notation $a+$ denotes $a+\varepsilon$ for an arbitrarily small $\varepsilon>0$.
Similarly, $a-$ denotes $a-\varepsilon$.   

We will use the shorthand notation $\zeta=(\xi,\eta)\in\mathbb{R}\times\mathbb{Z}$ for variables in physical space.
Define $(d\zeta)$ to be measuring-counting product measure on $\mathbb{R}\times\mathbb{Z}$
$$
\int a(\zeta)\,(d\zeta)=\sum_{\eta\in\mathbb{Z}}\int_{\mathbb{R}}a(\xi,\eta)\,d\xi,
$$
where $a(\zeta)=a(\xi,\eta)$ for $\zeta=(\xi,\eta)$.

We consider the functions on $f:\mathbb{R}\times\mathbb{T}\to \mathbb{C}$. 
Define the Fourier transform of a function $f$ defined on $\mathbb{R}\times\mathbb{T}$ by
$$
\widehat{f}(\zeta)=\frac{1}{2\pi}\int_{\mathbb{R}\times \mathbb{T}}e^{-iz\cdot\zeta}f(z)\,dz
$$
for $\zeta\in\mathbb{R}\times\mathbb{Z}$.
This leads to the Fourier inverse formula
$$
f(z)=\frac{1}{2\pi}\int e^{iz\cdot\zeta}\widehat{f}(\zeta)\,(d\zeta).
$$
By Plancherel, 
$$
\|f\|_{L^2(\mathbb{R}\times\mathbb{T})}^2=\int |\widehat{f}(\zeta)|^2\,(d\zeta).
$$
The usual properties of Fourier transform of functions on $\mathbb{R}\times\mathbb{T}$ hold as follows:
$$
\int\widehat{f}(\zeta)\overline{\widehat{g}(\zeta)}\,(d\zeta)=\int_{\mathbb{R}\times\mathbb{T}} f(z)\overline{g(z)}\,dz,
$$
$$
\widehat{fg}(\zeta)=\frac{1}{2\pi}\widehat{f}*\widehat{g}(\zeta),
$$
$$
\widehat{f*g}(\zeta)=2\pi \widehat{f}(\zeta)\widehat{g}(\zeta).
$$

We denote the symbol $\widehat{f}$ the space-time Fourier transform of a function $f$ defined on $\mathbb{R}\times\mathbb{R}\times \mathbb{T}$
$$
\widehat{f}(\tau,\zeta)=\frac{1}{(2\pi)^{3/2}}\int_{\mathbb{R}\times \mathbb{R}\times \mathbb{T}}e^{-it\tau-iz\cdot\zeta}f(t,z)\,dtdz
$$
for $(\tau,\zeta)\in\mathbb{R}\times \mathbb{R}\times\mathbb{Z}$, when this notation can not cause confusion.

We make use of two parameter spaces $X^{s,b}$ on $\mathbb{R}\times\mathbb{R}\times\mathbb{T}$ with the norm
\begin{equation*}
\|f\|_{X^{s,b}}=\left( \int \int_{\mathbb{R}}  \langle\zeta\rangle^{2s}\langle \tau-|\zeta|^2\rangle^{2b}|\widehat{f}(\tau,\zeta)|^2\,d\tau (d\zeta)\right)^{1/2}.
\end{equation*}
For the time interval $I$, we define the restricted spaces $X^{s,b}(I)$ by the norm
$$
\|f\|_{X^{s,b}(I)}=\inf\{\|F\|_{X^{s,b}(I)}\mid F|_{I\times \mathbb{R}\times\mathbb{T}}=f\}.
$$

\begin{remark}\label{rem:L4X}
It is well known that the Strichartz estimates related to \eqref{eq:L4Strichartz} is adapted into the spaces $X^{s,b}$ .
If $f$ is a function on $\mathbb{R}\times\mathbb{R}\times\mathbb{T}$, then
\begin{equation}\label{eq:L4X}
\|f\|_{L^4(I\times \mathbb{R}\times\mathbb{T})}\le C(|I|)\|f\|_{X^{0,1/2+}(I)}.
\end{equation}
The proof follows from a nice application of \eqref{eq:L4Strichartz}.
Stacking up dyadic level sets with respect to $\langle \tau- |\zeta|^2\rangle$ on which \eqref{eq:L4Strichartz} holds, we obtain \eqref{eq:L4X}. 
\end{remark}

\section{Angularly refined bilinear Strichartz estimates}\label{sec:bilinear}

In this section, we prove the bilinear estimates associated to the linear Schr\"odinger equation, induced with product space $\mathbb{R}\times\mathbb{T}$.

\begin{lemma}\label{lem:bilinear-o}
Let $1<N_1\le N_2$ and $M>1$.
Suppose that $\phi_{N_1},~\phi_{N_2}\in L^2(\mathbb{R}\times\mathbb{T})$ are the functions with spatial frequencies $N_1,~N_2$ respectively.
Then the space-time function
\begin{equation*}
F(t,z)=\int e^{-it(|\zeta_1|^2+|\zeta_2|^2)-iz\cdot(\zeta_1+\zeta_2)}1_{|\xi_1-\xi_2|\gtrsim M}\widehat{\phi}_{N_1}(\zeta_1)\widehat{\phi}_{N_2}(\zeta_2)\,(d\zeta_1)(d\zeta_2).
\end{equation*}
has the bound
$$
\|F\|_{L^2(\mathbb{R}\times\mathbb{R}\times\mathbb{T})}\lesssim \frac{N_1^{1/2}}{M^{1/2}}\|\phi_{N_1}\|_{L^2(\mathbb{R}\times\mathbb{T})}\|\phi_{N_2}\|_{L^2(\mathbb{R}\times\mathbb{T})}.
$$
\end{lemma}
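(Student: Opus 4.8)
The plan is to pass to the space-time Fourier side and reduce the bound to a purely geometric counting estimate on the paraboloid. Writing $\zeta_j=(\xi_j,\eta_j)$ and applying Plancherel in $(t,z)$, the space-time Fourier transform of $F$ is supported on the constraint set $\zeta_1+\zeta_2=\zeta$, $|\zeta_1|^2+|\zeta_2|^2=\tau$, so that
$$
\widehat{F}(\tau,\zeta)=c\int \widehat{\phi}_{N_1}(\zeta_1)\widehat{\phi}_{N_2}(\zeta-\zeta_1)1_{|\xi_1-\xi_2|\gtrsim M}\,d\mu_{\tau,\zeta}(\zeta_1),
$$
where $d\mu_{\tau,\zeta}(\zeta_1)=\delta(\tau-|\zeta_1|^2-|\zeta-\zeta_1|^2)\,(d\zeta_1)$ and $\xi_2=\xi-\xi_1$. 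Here I would integrate in the \emph{lower} frequency variable $\zeta_1$; this is the only place where the hypothesis $N_1\le N_2$ is used, and it is exactly what produces the factor $N_1$ rather than $N_2$ in the final bound.

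First I would apply Cauchy--Schwarz in $\zeta_1$ against the measure $1_{|\xi_1-\xi_2|\gtrsim M}\,d\mu_{\tau,\zeta}$, which splits $|\widehat{F}(\tau,\zeta)|^2$ into a counting factor $\int 1_{|\xi_1-\xi_2|\gtrsim M}\,d\mu_{\tau,\zeta}$ and a mass factor $\int |\widehat{\phi}_{N_1}(\zeta_1)|^2|\widehat{\phi}_{N_2}(\zeta-\zeta_1)|^2\,d\mu_{\tau,\zeta}$. After bounding the counting factor by a constant $K$ uniform in $(\tau,\zeta)$, I would integrate the remaining expression in $(\tau,\zeta)$: the $\tau$-integration collapses the $\delta$, and changing variables $\zeta_2=\zeta-\zeta_1$ via Fubini recombines the mass factor into $\|\phi_{N_1}\|_{L^2}^2\|\phi_{N_2}\|_{L^2}^2$. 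This yields $\|F\|_{L^2}^2\lesssim K\,\|\phi_{N_1}\|_{L^2}^2\|\phi_{N_2}\|_{L^2}^2$, reducing everything to showing $K\lesssim N_1/M$.

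The main obstacle, and really the whole content of the lemma, is this counting estimate $\sup_{\tau,\zeta}\int 1_{|\xi_1-\xi_2|\gtrsim M}\,d\mu_{\tau,\zeta}\lesssim N_1/M$. The key is to exploit that $(d\zeta_1)$ is continuous in $\xi_1\in\mathbb{R}$ but discrete in $\eta_1\in\mathbb{Z}$. For each fixed integer $\eta_1$, the phase $g(\xi_1)=|\zeta_1|^2+|\zeta-\zeta_1|^2$ is quadratic in $\xi_1$ with $g'(\xi_1)=2(2\xi_1-\xi)=2(\xi_1-\xi_2)$, so the constraint $\tau=g(\xi_1)$ has at most two roots and the $\delta$ contributes $\sum_{\mathrm{roots}}|g'(\xi_1)|^{-1}$. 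On the support of the cutoff one has $|g'(\xi_1)|=2|\xi_1-\xi_2|\gtrsim M$, so each root contributes $\lesssim M^{-1}$. The hard part is then merely the bookkeeping: the frequency localization $|\zeta_1|\sim N_1$ forces $|\eta_1|\lesssim N_1$, leaving at most $O(N_1)$ admissible integers $\eta_1$, and summing the per-$\eta_1$ contributions gives the desired $O(N_1/M)$. Taking square roots yields the stated inequality.
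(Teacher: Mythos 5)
Your proposal is correct and follows essentially the same route as the paper's proof: Plancherel plus Cauchy--Schwarz against the restricted surface measure, followed by the counting bound in which the $\delta$-function Jacobian $|g'(\xi_1)|=2|\xi_1-\xi_2|\gtrsim M$ yields $O(1/M)$ per root and the lattice constraint $|\eta_1|\lesssim N_1$ yields $O(N_1)$ admissible values of $\eta_1$. The paper carries out the identical computation, merely writing the phase as $\xi^2/2+2(\xi_1-\xi/2)^2+\eta_1^2+(\eta-\eta_1)^2$ so that the cutoff appears as $|\xi_1-\xi/2|\gtrsim M$.
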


\begin{proof}
By Placherel's identity, we have
\begin{equation*}
\begin{split}
& \|F\|_{L^2(\mathbb{R}\times\mathbb{R}\times\mathbb{T})}\\
= & \left\|\int \delta(\tau-|\zeta_1|^2-|\zeta_2|^2)\delta(\zeta-\zeta_1-\zeta_2)1_{|\xi_1-\xi_2|\gtrsim M} \widehat{\phi}_{N_1}(\zeta_1)\widehat{\phi}_{N_2}(\zeta_2)\,(d\zeta_1)(d\zeta_2)\right\|_{L^2(\mathbb{R}\times\mathbb{R}\times\mathbb{Z})}.
\end{split}
\end{equation*}
Further by using the Cauchy-Schwartz inequality, we get
\begin{equation*}
\begin{split}
& \|F\|_{L^2(\mathbb{R}\times\mathbb{R}\times\mathbb{T})}^2\lesssim \|\phi_{N_1}\|_{L^2(\mathbb{R}\times\mathbb{T})}^2\|\phi_{N_2}\|_{L^2(\mathbb{R}\times\mathbb{T})}^2\\
& \sup_{(\tau,\xi,\eta)\in\mathbb{R}\times\mathbb{R}\times\mathbb{Z}}\sum_{\eta_1\in\mathbb{Z}}1_{|\eta_1|\lesssim N_1}\int_{\mathbb{R}} 1_{|\xi_1-\xi/2|\gtrsim  M}\delta(\tau-\xi^2/2-2(\xi_1-\xi/2)^2-\eta_1^2-(\eta-\eta_1)^2)\,d\xi_1.
\end{split}
\end{equation*}
Therefore the proof of our claim reduces to show
$$
\int_{\mathbb{R}} 1_{|\xi_1-\xi/2|\gtrsim M}\delta(\tau-\xi^2/2-2(\xi_1-\xi/2)^2-\eta_1^2-(\eta-\eta_1)^2)\,d\xi_1\lesssim \frac{1}{M},
$$
which follows from the condition $|\xi_1-\xi/2|\gtrsim M$ in the integration process.
Hence we have the desired estimate.
\end{proof}

In keeping with the argument in Remark \ref{rem:L4X}, we obtain the following proposition from Lemma \ref{lem:bilinear-o}. 

\begin{proposition}\label{prop:1}
Let $1<N_1\le N_2$ and $M>1$.
Suppose that $f_{N_1},~f_{N_2}\in X^{0,1/2+}$ are the functions with spatial frequencies $N_1,~N_2$ respectively.
Then the space-time function
$$
F(t,z)=\int_{\mathbb{R}^2}\!\int e^{-it(\tau_1+\tau_2)-iz\cdot(\zeta_1+\zeta_2)}1_{|\xi_1-\xi_2|\gtrsim M}\widehat{f}_{N_1}(\tau_1,\zeta_1)\widehat{f}_{N_2}(\tau_2,\zeta_2)\,(d\zeta_1)(d\zeta_2)d\tau_1d\tau_2.
$$
has the bound
$$
\left\|F \right\|_{L^2(\mathbb{R}\times\mathbb{R}\times\mathbb{T})}\lesssim \frac{N_1^{1/2}}{M^{1/2}}\|f_{N_1}\|_{X^{0,1/2+}}\|f_{N_2}\|_{X^{0,1/2+}}.
$$
\end{proposition}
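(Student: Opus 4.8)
\noindent The plan is to run the transference (modulation--stacking) argument foreshadowed just before the statement, feeding the free--solution bound of Lemma \ref{lem:bilinear-o} through the reduction used in Remark \ref{rem:L4X}. First I would foliate each input over its modulation variable. For $j=1,2$ and $\sigma_j\in\mathbb{R}$, define the spatial profile $\phi_{N_j}^{\sigma_j}$ by
\begin{equation*}
\widehat{\phi_{N_j}^{\sigma_j}}(\zeta_j)=\widehat{f}_{N_j}(\sigma_j+|\zeta_j|^2,\zeta_j).
\end{equation*}
Each $\phi_{N_j}^{\sigma_j}$ has spatial frequencies $N_j$, and the change of variables $\tau_j=\sigma_j+|\zeta_j|^2$ together with Plancherel in $\zeta_j$ yields the slice identity
\begin{equation*}
\|f_{N_j}\|_{X^{0,1/2+}}^2=\int_{\mathbb{R}}\langle\sigma_j\rangle^{1+}\|\phi_{N_j}^{\sigma_j}\|_{L^2(\mathbb{R}\times\mathbb{T})}^2\,d\sigma_j.
\end{equation*}

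\noindent Performing the same substitution $\tau_j=\sigma_j+|\zeta_j|^2$ in the definition of $F$ and factoring out the unimodular phases $e^{-it\sigma_j}$ exhibits $F$ as a superposition of the free bilinear expressions of Lemma \ref{lem:bilinear-o}:
\begin{equation*}
F(t,z)=\int_{\mathbb{R}^2}e^{-it(\sigma_1+\sigma_2)}F_{\sigma_1,\sigma_2}(t,z)\,d\sigma_1 d\sigma_2,
\end{equation*}
where
\begin{equation*}
F_{\sigma_1,\sigma_2}(t,z)=\int e^{-it(|\zeta_1|^2+|\zeta_2|^2)-iz\cdot(\zeta_1+\zeta_2)}1_{|\xi_1-\xi_2|\gtrsim M}\widehat{\phi_{N_1}^{\sigma_1}}(\zeta_1)\widehat{\phi_{N_2}^{\sigma_2}}(\zeta_2)\,(d\zeta_1)(d\zeta_2)
\end{equation*}
is exactly the function to which Lemma \ref{lem:bilinear-o} applies, built from $\phi_{N_1}^{\sigma_1}$ and $\phi_{N_2}^{\sigma_2}$. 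Note that the frequency cutoff $1_{|\xi_1-\xi_2|\gtrsim M}$ is carried along unchanged by this superposition, so no further analysis of the cutoff is needed.

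\noindent Next I would move the $L^2(\mathbb{R}\times\mathbb{R}\times\mathbb{T})$ norm inside the $\sigma$--integral by Minkowski's inequality (the factor $e^{-it(\sigma_1+\sigma_2)}$ has modulus one and drops out) and estimate each slice by Lemma \ref{lem:bilinear-o}, obtaining
\begin{equation*}
\|F\|_{L^2(\mathbb{R}\times\mathbb{R}\times\mathbb{T})}\le\int_{\mathbb{R}^2}\|F_{\sigma_1,\sigma_2}\|_{L^2}\,d\sigma_1 d\sigma_2\lesssim\frac{N_1^{1/2}}{M^{1/2}}\prod_{j=1,2}\int_{\mathbb{R}}\|\phi_{N_j}^{\sigma_j}\|_{L^2(\mathbb{R}\times\mathbb{T})}\,d\sigma_j.
\end{equation*}
For each factor I would insert the weight $\langle\sigma_j\rangle^{1/2+}$ and apply the Cauchy--Schwarz inequality in $\sigma_j$, using the slice identity:
\begin{equation*}
\int_{\mathbb{R}}\|\phi_{N_j}^{\sigma_j}\|_{L^2}\,d\sigma_j\le\left(\int_{\mathbb{R}}\langle\sigma_j\rangle^{-(1+)}\,d\sigma_j\right)^{1/2}\left(\int_{\mathbb{R}}\langle\sigma_j\rangle^{1+}\|\phi_{N_j}^{\sigma_j}\|_{L^2}^2\,d\sigma_j\right)^{1/2}\lesssim\|f_{N_j}\|_{X^{0,1/2+}},
\end{equation*}
which chains together to give the claimed bound.

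\noindent The only step that genuinely invokes the hypothesis, and the sole place where the argument could break, is the convergence of the $\sigma_j$--integral: the Cauchy--Schwarz step produces the constant $\int_{\mathbb{R}}\langle\sigma_j\rangle^{-(1+)}\,d\sigma_j$, which is finite precisely because the modulation exponent is $b=1/2+$ rather than the borderline $b=1/2$ (at which this integral diverges logarithmically). Everything else is the free--solution estimate of Lemma \ref{lem:bilinear-o} combined with Plancherel, carried out exactly in the spirit of the dyadic stacking used to pass from \eqref{eq:L4Strichartz} to \eqref{eq:L4X} in Remark \ref{rem:L4X}.
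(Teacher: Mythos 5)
Your proposal is correct and is essentially the paper's own argument: the paper gives no explicit proof, merely asserting that the proposition follows from Lemma \ref{lem:bilinear-o} ``in keeping with the argument in Remark \ref{rem:L4X}'', and your continuous foliation over the modulation variables $\sigma_j=\tau_j-|\zeta_j|^2$, followed by Minkowski, the free-solution bound on each slice, and Cauchy--Schwarz in $\sigma_j$ (convergent precisely because $b=1/2+$), is exactly that transference argument carried out in detail.
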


\begin{remark}\label{rem:pro} 
While the estimate in Proposition \ref{prop:1} is of interesting it own right, it is not difficult to prove
$$
 \left\|F \right\|_{L^2(\mathbb{R}\times\mathbb{R}\times\mathbb{T})}\lesssim \frac{N_1^{1/2}}{M^{1/2}}\|f_{N_1}\|_{X^{0,1/2+}}\|f_{N_2}\|_{X^{0,1/2+}}
$$
for the space-time function
$$
F(t,z)=\int_{\mathbb{R}^2}\!\int e^{-it(\tau_1+\tau_2)-iz\cdot(\zeta_1+\zeta_2)}1_{|\xi_1+\xi_2|\gtrsim M}\widehat{f}_{N_1}(\tau_1,\zeta_1)\widehat{\overline{f}}_{N_2}(\tau_2,\zeta_2)\,(d\zeta_1)(d\zeta_2)d\tau_1d\tau_2.
$$
\end{remark}

As expected, we give the angularly-restricted version of the estimates associated to the product of linear solution, $e^{it\Delta}\phi_{N_1}e^{it\Delta}\phi_{N_2}$.

\begin{lemma}\label{lem:bilinear}
Let $0<\theta\ll 1<M$ and $1<N_1\le N_2$.
Suppose that $\phi_{N_1},~\phi_{N_2}\in L^2(\mathbb{R}\times\mathbb{T})$ are the functions with spatial frequencies $N_1,~N_2$ respectively.
Then the space-time function
\begin{equation*}\label{eq:bilinear}
F(t,z)=\int e^{-it(|\zeta_1|^2+|\zeta_2|^2)-iz\cdot(\zeta_1+\zeta_2)}1_{|\xi_1-\xi_2|\gtrsim M}1_{|\cos\angle(\zeta_1,\zeta_2)|\le \theta}\widehat{\phi}_{N_1}(\zeta_1)\widehat{\phi}_{N_2}(\zeta_2)\,(d\zeta_1)(d\zeta_2).
\end{equation*}
has the bound
$$
\|F\|_{L^2(\mathbb{R}\times\mathbb{R}\times\mathbb{T})}\lesssim \frac{\langle \theta N_2\rangle^{1/2}}{M^{1/2}}\|\phi_{N_1}\|_{L^2(\mathbb{R}\times\mathbb{T})}\|\phi_{N_2}\|_{L^2(\mathbb{R}\times\mathbb{T})}.
$$
\end{lemma}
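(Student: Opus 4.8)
The plan is to run the argument of Lemma~\ref{lem:bilinear-o} verbatim up to the point where the counting enters, and to improve only the count of admissible vertical frequencies by exploiting the angular cutoff. First I would apply Plancherel's identity as in Lemma~\ref{lem:bilinear-o}, resolve the delta $\delta(\zeta-\zeta_1-\zeta_2)$ by setting $\zeta_2=\zeta-\zeta_1$, and apply the Cauchy--Schwarz inequality in $\zeta_1$. This reduces the assertion to the uniform pointwise bound
\[
K(\tau,\zeta):=\int \delta\big(\tau-|\zeta_1|^2-|\zeta-\zeta_1|^2\big)\,1_{|\xi_1-\xi_2|\gtrsim M}\,1_{|\cos\angle(\zeta_1,\zeta_2)|\le\theta}\,(d\zeta_1)\lesssim\frac{\langle\theta N_2\rangle}{M},
\]
where the localizations $|\zeta_1|\sim N_1$ and $|\zeta_2|\sim N_2$ are understood to be built into the support of the integrand.

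Next I would integrate in $\xi_1$ against the remaining delta. Since $\partial_{\xi_1}\big(|\zeta_1|^2+|\zeta-\zeta_1|^2\big)=2(\xi_1-\xi_2)$, each root $\xi_1$ is weighted by $\tfrac{1}{2|\xi_1-\xi_2|}$, which on the support $|\xi_1-\xi_2|\gtrsim M$ is $\lesssim M^{-1}$; this is exactly the mechanism that produced the factor $M^{-1}$ in Lemma~\ref{lem:bilinear-o}. Consequently
\[
K(\tau,\zeta)\lesssim\frac{1}{M}\,\#\{\eta_1\in\mathbb{Z}:\text{all constraints can be met for some admissible }\xi_1\}.
\]
In Lemma~\ref{lem:bilinear-o} this cardinality was bounded trivially by $\#\{\eta_1:|\eta_1|\lesssim N_1\}\lesssim N_1$; the entire gain here must come from replacing $N_1$ by $\langle\theta N_2\rangle$.

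The key step, and the step I expect to be the main obstacle, is therefore the geometric lattice estimate $\#\{\eta_1\in\mathbb{Z}:\eta_1\text{ admissible}\}\lesssim\langle\theta N_2\rangle$. Writing $\zeta_1=\tfrac{\zeta}{2}+w$ and $\zeta_2=\tfrac{\zeta}{2}-w$, the dispersion relation $|\zeta_1|^2+|\zeta_2|^2=\tau$ forces $|w|=R$ with $R^2=\tfrac14(2\tau-|\zeta|^2)$, so that $\zeta_1$ traverses a circle of radius $R\lesssim N_2$; here $R\lesssim N_2$ follows from $R=\tfrac12|\zeta_1-\zeta_2|\le\tfrac12(|\zeta_1|+|\zeta_2|)$. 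The plan is to rewrite the angular cutoff in the variables $w$ and $\zeta$ and to read it as a localization of the direction of $w$ to an arc of angular length $\lesssim\theta$ on this circle, across which the $\eta$-coordinate of $\zeta_1$ changes by at most $\lesssim\theta R\lesssim\theta N_2$. Hence the admissible integers $\eta_1$ fill an interval of length $\lesssim\theta N_2$ and number at most $\lesssim\theta N_2+1=\langle\theta N_2\rangle$, the additive $1$ accounting for the lattice spacing and hence for the bracket $\langle\,\cdot\,\rangle$. I expect the delicate points to be: the algebraic passage from the stated cosine cutoff to the angular-arc description; the lattice-point count on the arc, including the degenerate orientations (e.g. $\zeta$ nearly parallel to the $\eta$-axis) where the arc projects onto a very short $\eta$-interval; and verifying that no orientation-dependent factor survives to spoil the clean product with the weight $|\xi_1-\xi_2|^{-1}$ already extracted.

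Combining the last two displays gives $K(\tau,\zeta)\lesssim\langle\theta N_2\rangle/M$ uniformly in $(\tau,\zeta)$, and substituting this into the Cauchy--Schwarz bound of the first step produces the claimed estimate.
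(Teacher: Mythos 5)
Your reduction follows the paper up to the Cauchy--Schwarz step, but the geometric counting claim that is supposed to produce the factor $\langle\theta N_2\rangle$ --- the step you yourself flag as the main obstacle --- is false, so the uniform kernel bound you reduce to cannot hold. Write $\zeta_1=\zeta/2+w$, $\zeta_2=\zeta/2-w$ as you do; on the support of the delta one has $|w|=R$ with $R^2=\tau/2-|\zeta|^2/4$, and hence
\[
\zeta_1\cdot\zeta_2=\frac{|\zeta|^2}{4}-R^2=\frac{|\zeta|^2-\tau}{2},
\]
a quantity that is \emph{constant} along the circle. The cutoff $|\cos\angle(\zeta_1,\zeta_2)|\le\theta$ therefore reads $\bigl||\zeta|^2-\tau\bigr|\le 2\theta|\zeta_1||\zeta_2|$, which constrains $(\tau,\zeta)$ rather than localizing the direction of $w$ to a $\theta$-arc. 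In the degenerate regime $\tau=|\zeta|^2$ (so $R=|\zeta|/2$ and the circle passes through $0$ and $\zeta$), Thales' theorem gives $\zeta_1\perp\zeta_2$ identically on the whole circle, and the angular cutoff is void. Concretely, take $N_1=N_2=N$, $\zeta=(N,0)$, $\tau=N^2$, $\theta=N^{-1}$, $M=N^{1/2}$: on the arc $\{\zeta/2+\tfrac N2(\cos\phi,\sin\phi)\,:\,\phi\in[\pi/3,2\pi/3]\}$ all your constraints hold ($|\zeta_1|\sim|\zeta_2|\sim N$, $|\xi_1-\xi_2|=N|\cos\phi|\gtrsim M$ off a negligible subarc, cosine exactly $0$), the arc meets $\sim N$ lattice heights $\eta_1$, and summing the Jacobian weights $(2|\xi_1-\xi_2|)^{-1}$ gives $K(\tau,\zeta)\gtrsim 1$, whereas your claimed bound is $\langle\theta N_2\rangle/M\sim N^{-1/2}$. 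So applying Cauchy--Schwarz globally is already too lossy; no refinement of the lattice count can rescue the argument from there.

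This is exactly why the paper does \emph{not} apply Cauchy--Schwarz globally. It first decomposes both inputs into angular sectors of width $\theta$ as seen from the origin, $\phi_{N_j}=\sum_{\ell}\phi_{N_j,\ell}$, and uses the cosine cutoff only to force the near-diagonal pairing $|\ell_1-\ell_2|=\pi/(2\theta)+O(1)$ or $3\pi/(2\theta)+O(1)$. The kernel/counting estimate (via a Whitney decomposition of the annulus $|\zeta_1-\zeta/2|=r+O(1/r)$) is then proved for a \emph{fixed pair of sectors}, where it is membership of $\zeta-\zeta_1$ in a single origin-based $\theta$-sector --- not the cosine condition --- that limits the number of admissible $\eta_1$ to $O(\langle\theta N_2\rangle)$; finally the almost-diagonal sum over $(\ell_1,\ell_2)$ is closed by Cauchy--Schwarz and orthogonality of the sectors. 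In your Thales configuration this succeeds because the offending circle is shared among $\sim\theta^{-1}$ sector pairs, each contributing only its own portion of the $L^2$ mass. Without this preliminary sector decomposition (or an equivalent almost-orthogonality mechanism), your counting step fails and the proof has a genuine gap.
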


\begin{remark}
In \cite{ckstt1}, such estimate was obtained for the problem in Euclidean space $\mathbb{R}^2$, in which the upper bound on the right-hand side of the estimate is $\theta^{1/2}$.
\end{remark}


\begin{proof}[Proof of Lemma \ref{lem:bilinear}]
The proof follows in a similar way to \cite{ckstt1}.
We first consider the restricted case when the Fourier transform of $\phi_{N_j}$ is supported in the angular sector
$$
\{\zeta_j\mid \arg(\zeta_j)=\ell_j\theta+O(\theta)\}\subset \mathbb{R}\times\mathbb{Z}
$$
where $\ell_j~(j=1,2)$ are arbitrary integers $1\le \ell_j\le 2\pi/\theta$, respectively.
With a truncated setup, we begin by considering the following space-time function
\begin{equation*}\label{eq:bilinear-r}
\begin{split}
F(t,z)= \int &e^{-it(|\zeta_1|^2+|\zeta_2|^2)-iz\cdot(\zeta_1+\zeta_2)} 1_{|\xi_1-\xi_2|\gtrsim M}\widehat{\phi}_{N_1}(\zeta_1)\widehat{\phi}_{N_2}(\zeta_2)\\
 & 1_{\arg(\zeta_1)=\ell_1\theta+O(\theta)}1_{\arg(\zeta_2)=\ell_2\theta+O(\theta)} \,(d\zeta_1)(d\zeta_2).
\end{split}
\end{equation*}
By using Cauchy-Schwarz inequality, we have 
\begin{equation*}
\begin{split}
|\widehat{F}(\tau,\zeta)|^2\lesssim  & \int |\delta(\tau-|\zeta_1|^2-|\zeta-\zeta_1|^2)| |\widehat{\phi}_{N_1}(\zeta_1)|^2|\widehat{\phi}_{N_2}(\zeta-\zeta_1)|^2\,(d\zeta_1)\\
& \sup_{(\tau,\zeta)\in\mathbb{R}\times\mathbb{R}\times\mathbb{Z}}\int 1_{\arg(\zeta_1)=\ell_1\theta+O(\theta)}1_{\arg(\zeta-\zeta_1)=\ell_2\theta+O(\theta)}1_{|\zeta_1|\sim N_1}1_{|\zeta-\zeta_1|\sim N_2}\\
& 1_{|\xi_1-\xi/2|\gtrsim M} \delta(\tau-|\zeta_1|^2-|\zeta-\zeta_1|^2)\,(d\zeta_1).
\end{split}
\end{equation*}
We wish to show
\begin{equation}\label{eq:enough1}
\begin{split}
& \sup_{(\tau,\zeta)\in\mathbb{R}\times\mathbb{R}\times\mathbb{Z}}\int 1_{\arg(\zeta_1)=\ell_1\theta+O(\theta)}1_{\arg(\zeta-\zeta_1)=\ell_2\theta+O(\theta)} 1_{|\zeta_1|\sim N_1}1_{|\zeta-\zeta_1|\sim N_2}1_{|\xi_1-\xi/2|\gtrsim M}\\
&   \delta(\tau-|\zeta_1|^2-|\zeta-\zeta_1|^2)\,(d\zeta_1)\\
\lesssim & \frac{\langle \theta N_2\rangle }{M}.
\end{split}
\end{equation}
With the help of the delta function (density distribution), we may assume $|\tau-|\zeta_1|^2-|\zeta-\zeta_1|^2|\lesssim 1 $ in \eqref{eq:enough1}.
Then the estimate \eqref{eq:enough1} is equivalent to the following
\begin{equation}\label{eq:enough2}
\sup_{(\tau,\xi,\eta)\in\mathbb{R}\times\mathbb{R}\times\mathbb{Z}}|A(\tau,\zeta)|\lesssim \frac{\langle \theta N_2\rangle}{M},
\end{equation}
where
\begin{equation*}
\begin{split}
A(\tau,\zeta)=\{\zeta_1\in\mathbb{R}\times\mathbb{Z}\mid & \arg(\zeta_1)=\ell_1\theta+O(\theta),~\arg(\zeta-\zeta_1)=\ell_2\theta+O(\theta),\\
& |\zeta_1|\sim N_1,~|\zeta-\zeta_1|\sim N_2~|\xi_1-\xi/2|\gtrsim M,\\
& |\zeta_1-\zeta/2|=r+O(1/r)\}
\end{split}
\end{equation*}
with $M\lesssim r\lesssim  N_2$.
Here we will use a Whitney decomposition to the annulus $|\zeta_1-\zeta/2|=r+O(1/r)$ with center $\zeta/2$, which is a partition of the annulus into rectangles such that the size of each rectangle is $1\times O(1/r)$ with central angle $O(1/r)$. 
Now split the angular $\arg(\zeta_1-\zeta)=\ell_2\theta+O(\theta)$ into $\langle \theta r\rangle$-pieces, that is
$$
\arg(\zeta_1-\zeta)\in \ell_2\theta+\left[\frac{k}{2r},\frac{k+1}{2r}\right],\quad |k|\lesssim \langle \theta r\rangle.
$$
Each angular area $\arg(\zeta_1-\zeta)\in \ell_1\theta+[k/r,(k+1)/r]$ intersects with at most three of the angular sectors with central angle $O(1/r)$ induced by the Whitney decomposition to the annulus $|\zeta_1-\zeta/2|=r+O(1/r)$, as we argued above.
Observe that the number of the arcs is at most $O(\langle \theta N_2\rangle)$, which corresponds to the number of $\eta_1$ in the support of the integral on the left-hand side of \eqref{eq:enough1}.
For each $\eta_1$, the measure of $\xi_1$ satisfying $(\xi_1,\eta_1)\in A(\tau,\xi,\eta)$ is at most $O(1/M)$. 
Then we have
\begin{equation*}
|A(\tau,\zeta)| \lesssim \sum_{\eta_1}\frac{1}{M}\lesssim  \frac{\langle \theta N_2\rangle}{M},
\end{equation*}
which shows \eqref{eq:enough2}.

By raising the above estimate, we now show the estimate stated in the lemma.
Following to the instruction of the paper \cite{ckstt1}, we use the angular decomposition technique for computing the Fourier transformation
$$
\phi_{N_j}=\sum_{1\le \ell<2\pi/\theta}\phi_{N_j,\ell},
$$
for $j=1,2$, where the Fourier transform of $\phi_{N_j,\ell}$ is supported in the disjointed angular sector
$$
\{\zeta_j\in \mathbb{R}\times\mathbb{Z}\mid |\zeta_j|\sim N_j,~\arg( \zeta_j)=\theta \ell+O(\theta)\}.
$$
By an argument mimicking of \cite{ckstt1}, the angular restriction $|\cos\angle(\zeta_1,\zeta_2)|\le \theta$ gives that either $|\ell_1-\ell_2|=\pi/(2\theta)+O(1)$ or $|\ell_1-\ell_2|=3\pi/(2\theta)+O(1)$ holds.
Then
\begin{equation}
\begin{split}
\|F\|_{L^2(\mathbb{R}\times\mathbb{R}\times\mathbb{T})} & \lesssim 
\sum_{\scriptstyle |\ell_1-\ell_2|=\pi/(2\theta)+O(1) \atop{\scriptstyle\mbox{or}~|\ell_1-\ell_2|=3\pi/(2\theta)+O(1)}}\frac{\langle \theta N_2\rangle^{1/2}}{M^{1/2}}\|\phi_{N_1,\ell_1}\|_{L^2(\mathbb{R}\times\mathbb{T})}\|\phi_{N_2,\ell_2}\|_{L^2(\mathbb{R}\times\mathbb{T})}\\
& \lesssim \frac{\langle \theta N_2\rangle^{1/2}}{M^{1/2}}\left(\sum_{1\le \ell_1<2\pi/\theta}\|\phi_{N_1,\ell_1}\|_{L^2(\mathbb{R}\times\mathbb{T})}^2\right)\left(\sum_{1\le \ell_2<2\pi/\theta}\|\phi_{N_2,\ell_2}\|_{L^2(\mathbb{R}\times\mathbb{T})}^2\right)\\
& \lesssim \frac{\langle \theta N_2\rangle^{1/2}}{M^{1/2}}\|\phi_{N_1}\|_{L^2(\mathbb{R}\times\mathbb{T})}\|\phi_{N_2(\mathbb{R}\times\mathbb{T})}\|_{L^2}.
\end{split}
\end{equation}
\end{proof}

The next proposition is a direct consequence of Lemma \ref{lem:bilinear}.

\begin{proposition}\label{prop:bilinearX}
Let $0<\theta\ll 1<M$ and $1<N_1\le N_2$.
Suppose that $f_{N_1},~f_{N_2}\in X^{0,1/2+}$ are the functions with spatial frequencies $N_1,~N_2$ respectively.
Then the space-time function
\begin{equation*}
\begin{split}
F(t,z)=\int_{\mathbb{R}^2}\!\int  & e^{-it(\tau_1+\tau_2)-iz\cdot(\zeta_1+\zeta_2)}1_{|\xi_1-\xi_2|\gtrsim M}1_{|\cos\angle(\zeta_1,\zeta_2)|\le \theta}\\
 & \widehat{f}_{N_1}(\tau_1,\zeta_1)\widehat{f}_{N_2}(\tau_2,\zeta_2)\,(d\zeta_1)(d\zeta_2)d\tau_1d\tau_2.
\end{split}
\end{equation*}
has the bound
$$
\|F\|_{L^2(\mathbb{R}\times\mathbb{R}\times\mathbb{T})}\lesssim \frac{\langle \theta N_2\rangle^{1/2} }{M^{1/2}}\|f_{N_1}\|_{X^{0,1/2+}}\|f_{N_2}\|_{X^{0,1/2+}}.
$$
\end{proposition}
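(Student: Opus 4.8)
The plan is to deduce Proposition \ref{prop:bilinearX} from Lemma \ref{lem:bilinear} by the standard $X^{s,b}$ transference argument, exactly as Proposition \ref{prop:1} is obtained from Lemma \ref{lem:bilinear-o}. The only structural difference between the two statements is that the phase $e^{-it(|\zeta_1|^2+|\zeta_2|^2)}$ of the free-solution product in the lemma is replaced in the proposition by the genuine time frequencies $e^{-it(\tau_1+\tau_2)}$. I will absorb this discrepancy by foliating each $f_{N_j}$ over its modulation variable and integrating the fixed-modulation estimate supplied by the lemma.

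Concretely, write $\lambda_j=\tau_j-|\zeta_j|^2$ and, for each fixed $\lambda_j$, define the spatial function $\phi_{N_j,\lambda_j}$ by $\widehat{\phi}_{N_j,\lambda_j}(\zeta_j)=\widehat{f}_{N_j}(\lambda_j+|\zeta_j|^2,\zeta_j)$, so that $\phi_{N_j,\lambda_j}$ has spatial frequency $N_j$ and $\widehat{f}_{N_j}(\tau_j,\zeta_j)=\widehat{\phi}_{N_j,\lambda_j}(\zeta_j)$. Since $e^{-it\tau_j}=e^{-it\lambda_j}e^{-it|\zeta_j|^2}$, substituting into the definition of $F$ exhibits it as the superposition
$$
F(t,z)=\int_{\mathbb{R}^2}e^{-it(\lambda_1+\lambda_2)}F_{\lambda_1,\lambda_2}(t,z)\,d\lambda_1 d\lambda_2,
$$
where $F_{\lambda_1,\lambda_2}$ is precisely the bilinear object of Lemma \ref{lem:bilinear} built from $\phi_{N_1,\lambda_1}$ and $\phi_{N_2,\lambda_2}$. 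The modulation prefactor $e^{-it(\lambda_1+\lambda_2)}$ has modulus one, so Minkowski's inequality followed by Lemma \ref{lem:bilinear} gives
$$
\|F\|_{L^2}\le \int_{\mathbb{R}^2}\|F_{\lambda_1,\lambda_2}\|_{L^2}\,d\lambda_1 d\lambda_2
\lesssim \frac{\langle\theta N_2\rangle^{1/2}}{M^{1/2}}\int_{\mathbb{R}}\|\phi_{N_1,\lambda_1}\|_{L^2}\,d\lambda_1\int_{\mathbb{R}}\|\phi_{N_2,\lambda_2}\|_{L^2}\,d\lambda_2,
$$
the factorization being legitimate because the right-hand side of the lemma splits as a product.

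It then remains to recognize the two $\lambda_j$-integrals as $X^{0,1/2+}$ norms. By Cauchy--Schwarz in $\lambda_j$ after inserting the weight $\langle\lambda_j\rangle^{1/2+}$, one has
$$
\int_{\mathbb{R}}\|\phi_{N_j,\lambda_j}\|_{L^2}\,d\lambda_j\le \left(\int_{\mathbb{R}}\langle\lambda_j\rangle^{-1-}\,d\lambda_j\right)^{1/2}\left(\int_{\mathbb{R}}\langle\lambda_j\rangle^{1+}\|\phi_{N_j,\lambda_j}\|_{L^2}^2\,d\lambda_j\right)^{1/2},
$$
where the first factor is a finite constant and the second, by Plancherel together with the change of variable $\tau_j\mapsto\lambda_j$, equals exactly $\|f_{N_j}\|_{X^{0,1/2+}}$. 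Combining the displays yields the asserted bound. The sole delicate point, and the only place the hypothesis $b=1/2+$ rather than $b=1/2$ is needed, is precisely this last step: the $L^1_{\lambda_j}$ quantity is not controlled by $\|f_{N_j}\|_{X^{0,1/2}}$, and the extra $\varepsilon$ of regularity in $b$ is exactly what renders $\langle\lambda_j\rangle^{-1-}$ integrable. Everything else is a direct transcription of Lemma \ref{lem:bilinear}, which is why the proposition is stated as a direct consequence.
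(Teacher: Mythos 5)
Your proof is correct and is precisely the transference argument the paper has in mind when it states Proposition \ref{prop:bilinearX} as a ``direct consequence'' of Lemma \ref{lem:bilinear} (cf.\ the stacking-over-modulation argument indicated in Remark \ref{rem:L4X} and used for Proposition \ref{prop:1}): foliation over the modulations $\lambda_j=\tau_j-|\zeta_j|^2$, Minkowski's inequality, and Cauchy--Schwarz in $\lambda_j$, with the integrability of $\langle\lambda_j\rangle^{-1-}$ being exactly where $b=1/2+$ is used. There are no gaps; your write-up simply makes explicit the details the paper leaves implicit.
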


Our next lemma concerns with the estimates associated with  the product of linear solution, $e^{it\Delta}\phi_{N_1}\overline{e^{it\Delta}\phi_{N_2}}$.

\begin{lemma}\label{lem:bilinear0}
Let $0<\theta\ll 1<M,~\ell\in\mathbb{Z}$ and $N_1,~N_2>1$.
Suppose that $\phi_{N_1},~\phi_{N_2,\ell}\in L^2(\mathbb{R}\times\mathbb{T})$ are the functions with spatial frequencies $N_1,~N_2$ respectively.
Also assume that the Fourier transform of $\phi_{N_2,\ell}$ is supported on $\{\zeta\in\mathbb{R}\times\mathbb{Z}\mid \arg(\zeta)=\ell\theta+O(\theta)\}$.
Then the space-time function
\begin{equation*}\label{eq:bilinear}
\begin{split}
F(t,z) \int &  e^{-it(|\zeta_1|^2-|\zeta_2|^2)-iz\cdot(\zeta_1+\zeta_2)}1_{|\xi_1+\xi_2|\gtrsim  M}1_{\arg(\zeta_2)=\ell\theta+O(\theta)}\\
& \widehat{\phi}_{N_1}(\zeta_1)\widehat{\phi}_{N_2,\ell}(\zeta_2)\,(d\zeta_1)(d\zeta_2)
\end{split}
\end{equation*}
has the bound
$$
\|F\|_{L^2(\mathbb{R}\times\mathbb{R}\times\mathbb{T})}\lesssim \frac{\langle \theta (N_1+N_2)\rangle^{1/2}}{M^{1/2}}\|\phi_{N_1}\|_{L^2(\mathbb{R}\times\mathbb{T})}\|\phi_{N_2,\ell}\|_{L^2(\mathbb{R}\times\mathbb{T})},
$$
where the constant on the right-hand side is independent of $\theta,~M,~\ell,~N_1,~N_2$. 
\end{lemma}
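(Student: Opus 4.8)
The plan is to follow the scheme of Lemmas \ref{lem:bilinear-o} and \ref{lem:bilinear}, exploiting the single feature that distinguishes the $u\overline{u}$ interaction from the $uu$ one: once the output frequency $\zeta=\zeta_1+\zeta_2$ is fixed and $\zeta_2=\zeta-\zeta_1$, the phase $|\zeta_1|^2-|\zeta_2|^2=2\zeta\cdot\zeta_1-|\zeta|^2$ is \emph{affine} in the integration variable $\zeta_1$. Thus Plancherel identifies $\|F\|_{L^2}$ with the $L^2_{\tau,\zeta}$-norm of $\int\delta(\tau-2\zeta\cdot\zeta_1+|\zeta|^2)\,1_{\cdots}\,\widehat{\phi}_{N_1}(\zeta_1)\widehat{\phi}_{N_2,\ell}(\zeta-\zeta_1)\,(d\zeta_1)$, the resonance set being the hyperplane $\tau=2\zeta\cdot\zeta_1-|\zeta|^2$ rather than the sphere met before. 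Its $\xi_1$-gradient equals $2\xi=2(\xi_1+\xi_2)$, which has size $\gtrsim M$ on the support of $1_{|\xi_1+\xi_2|\gtrsim M}$; hence integrating the delta in $\xi_1$ always costs a factor $\lesssim M^{-1}$, \emph{independently of the $\eta$-variables}. This robust fact is the source of the $M^{-1/2}$ and needs no angular information.

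Since only $\phi_{N_2,\ell}$ is pre-localized, I would first prove the estimate in the model case where $\phi_{N_1}$ is in addition supported in a single sector $\arg(\zeta_1)=\ell_1\theta+O(\theta)$. Then both inputs are angularly localized, exactly the situation of Lemma \ref{lem:bilinear}, and Cauchy--Schwarz reduces matters to a counting bound $\sup_{(\tau,\zeta)}|A_{\ell_1,\ell}(\tau,\zeta)|\lesssim \langle\theta(N_1+N_2)\rangle/M$, where $A_{\ell_1,\ell}$ collects the $\zeta_1$ on the resonance hyperplane with $\zeta_1$ in sector $\ell_1$, $\zeta-\zeta_1$ in sector $\ell$, $|\zeta_1|\sim N_1$, $|\zeta-\zeta_1|\sim N_2$. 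The $M^{-1}$ comes from the gradient computation above; for the number of admissible integers $\eta_1$ one intersects the resonance line $\{\zeta_1\cdot\zeta=\mathrm{const}\}$ with the two angular sectors and counts lattice points, by the Whitney-type argument of Lemma \ref{lem:bilinear} with the sphere replaced by the hyperplane, producing $\lesssim\langle\theta(N_1+N_2)\rangle$. This yields the per-pair estimate $\|F_{\ell_1,\ell}\|_{L^2}^2\lesssim \tfrac{\langle\theta(N_1+N_2)\rangle}{M}\|\phi_{N_1,\ell_1}\|_{L^2}^2\|\phi_{N_2,\ell}\|_{L^2}^2$.

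The remaining step — summing $F=\sum_{\ell_1}F_{\ell_1,\ell}$ over $\ell_1$ so as to recover the full factor $\sum_{\ell_1}\|\phi_{N_1,\ell_1}\|^2=\|\phi_{N_1}\|^2$ — is where I expect the genuine difficulty to lie. In the $uu$ setting the integrand carried its own cutoff $1_{|\cos\angle(\zeta_1,\zeta_2)|\le\theta}$, which forced $|\ell_1-\ell_2|$ into one of two $O(1)$ windows and collapsed the double sum by Cauchy--Schwarz; here there is no constraint linking $\ell_1$ to $\ell$, and precisely because the resonance is a hyperplane the nearly transversal configurations $\zeta_2\perp\zeta$ let many sectors $\ell_1$ feed a single output $(\tau,\zeta)$, so a crude orthogonality of the $F_{\ell_1,\ell}$ is lossy. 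The way I would secure the reassembly is to pass to the partial Fourier series in the torus variable: writing $F=\sum_m e^{imy}G_m$ with $m=\eta_1+\eta_2$ and $|\zeta_1|^2-|\zeta_2|^2=(\xi_1^2-\xi_2^2)+(\eta_1-\eta_2)m$, each $G_m$ is a superposition, over pairs with fixed $\eta_1+\eta_2$, of one-dimensional products $e^{it\partial_x^2}\phi_{N_1}^{\eta_1}\overline{e^{it\partial_x^2}\phi_{N_2,\ell}^{\eta_2}}$. The one-dimensional bilinear Strichartz estimate on $\mathbb{R}$ supplies the clean $M^{-1/2}$ for each pair (its Jacobian is again $2|\xi_1+\xi_2|$), while the distinct time modulations $e^{-it(\eta_1-\eta_2)m}$ separate the pairs in the dual time variable; bounding the residual overlap by $\langle\theta(N_1+N_2)\rangle$ through the angular support of $\phi_{N_2,\ell}$, and then summing $\sum_m\|G_m\|_{L^2}^2$ by Plancherel in $y$, returns exactly $\tfrac{\langle\theta(N_1+N_2)\rangle}{M}\|\phi_{N_1}\|^2\|\phi_{N_2,\ell}\|^2$. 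Controlling that overlap uniformly in $m$ is the step I anticipate to be the crux.
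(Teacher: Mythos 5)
Your first two paragraphs are, in substance, the paper's own proof: Cauchy--Schwarz after Plancherel, the observation that the resonance set is the hyperplane $\tau=2\zeta\cdot\zeta_1-|\zeta|^2$ whose $\xi_1$-derivative $2|\xi|=2|\xi_1+\xi_2|\gtrsim M$ makes each lattice row cost $O(1/M)$, and a count of $O(\langle\theta(N_1+N_2)\rangle)$ admissible rows coming from the angular support of $\phi_{N_2,\ell}$ (the paper never decomposes $\phi_{N_1}$ into sectors; Cauchy--Schwarz does not require it, so your $\ell_1$-decomposition and the reassembly step are dispensable). The genuine problem is that this row count is false precisely in the degenerate configuration you flag in your last paragraph, and it fails already for a single pair of sectors, not merely in the summation over $\ell_1$. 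Take the sector vertical, $\ell\theta\approx\pi/2$, with $\theta=1/N_2$, $N_1=N_2=:N$, and consider output frequencies $\zeta=(\xi,0)$ with $|\xi|\sim M$: the resonance line $\{2\xi\xi_1=\tau+|\zeta|^2\}$ is then \emph{vertical}, i.e.\ parallel to the sector axis, so it runs inside the sector along its entire length; it meets $\sim N_2$ lattice rows rather than $\langle\theta N_2\rangle\sim 1$, and each row contributes $1/(2|\xi|)$, so the supremum in your counting bound --- and in the paper's display \eqref{eq:+} --- is of size $N_2/M$, larger than $\langle\theta(N_1+N_2)\rangle/M$ by a factor $N_2$. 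This is the structural difference from Lemma \ref{lem:bilinear}: a circle crosses a $\theta$-sector in an arc of length $O(\theta r)$ no matter how the sector is oriented, whereas a line has no curvature to play against the sector. Note that the lemma itself is \emph{not} contradicted by this example (with $\widehat{\phi}_{N_1}=1_{[M-1,M+1]\times[-2N,-N]}$, $\widehat{\phi}_{N_2,\ell}=1_{[-1,1]\times[N,2N]}$ one computes $\|F\|_{L^2}^2\sim N^2/M$, which exactly saturates the claimed bound); what fails is the pointwise level-set supremum, which is lossy exactly at these outputs. Indeed the whole fiber $\eta=0$ always obeys the admissible full-mass bound
$$
\int\!\!\int\bigl|\widehat{F}(\tau,\xi,0)\bigr|^{2}\,d\tau\,d\xi\ \lesssim\ \frac{1}{M}\,\|\phi_{N_1}\|_{L^2(\mathbb{R}\times\mathbb{T})}^{2}\,\|\phi_{N_2,\ell}\|_{L^2(\mathbb{R}\times\mathbb{T})}^{2},
$$
by applying Cauchy--Schwarz in $\eta_1$ \emph{before} taking any supremum; a correct proof has to treat the nearly degenerate outputs (small $|\zeta\cdot e_\ell|$, $e_\ell$ the sector direction) by such fiberwise estimates rather than by a uniform count.

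Your proposed repair does not close this gap, because its key claim fails for the same reason. In the decomposition $F=\sum_m e^{imy}G_m$, the modulations separating the pairs with $\eta_1+\eta_2=m$ are $e^{-it(\eta_1-\eta_2)m}=e^{-it(2\eta_1-m)m}$; for $m=0$ they are all identically $1$ and separate nothing. The overlap there is genuinely of order $N_2$, not $\langle\theta(N_1+N_2)\rangle$: in the example above all rows of both data are identical, so $G_0$ is $N_2$ copies of a single one-dimensional product and $\|G_0\|_{L^2}^2$ is \emph{quadratic} in the number of rows, whereas your claimed uniform overlap would make it linear. Hence ``bounding the residual overlap by $\langle\theta(N_1+N_2)\rangle$ uniformly in $m$'' is impossible; the degenerate fibers must instead be handled by the crude bound $\|G_m\|_{L^2}\lesssim M^{-1/2}\sum_{\eta_1}\|\phi_{N_1}^{\eta_1}\|_{L^2}\|\phi_{N_2,\ell}^{m-\eta_1}\|_{L^2}$ together with an argument that their total contribution is acceptable --- and that is exactly the piece missing from the proposal (and, it should be said, glossed over in the paper's own justification of \eqref{eq:+}, which asserts the same count your second paragraph does). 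So: the $M^{-1}$ mechanism and the transversal-case count are right and agree with the paper, but the per-pair count of your second paragraph and the uniform overlap claim of your third are both false as stated, leaving a genuine gap at the step you yourself identified as the crux.
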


\begin{proof}
We will use the similar argument to the proof of Lemma \ref{lem:bilinear}.
It suffices to show
\begin{equation}\label{eq:+}
\begin{split}
& \sup_{(\tau,\zeta)\in\mathbb{R}\times\mathbb{R}\times\mathbb{T}}\int 1_{\arg(\zeta_2)=\ell\theta+O(\theta)}1_{|\xi_1+\xi_2|\gtrsim M}1_{|\zeta-\zeta_1|\sim N_2}
  \delta(\tau-|\zeta_1|^2+|\zeta-\zeta_1|^2)\,(d\zeta_1)\\
\lesssim & \frac{\langle \theta N_2\rangle }{M}.
\end{split}
\end{equation}
We again divide the arc $|\zeta_1-\zeta|=r\lesssim N_1+N_2,~\arg(\zeta_1-\zeta)=\ell \theta+O(\theta)$ into the arcs of length $O(1)$ with angler $O(1/\langle r\rangle)$.
The number of the arcs is at most $O(\langle \theta (N_1+N_2)\rangle )$. 
Since $-|\zeta_1|^2+|\zeta-\zeta_1|^2=|\zeta|^2-2\xi\xi_1-2\eta\eta_1$ and $|\xi|\sim M$, we have that the integral on the left-hand side of \eqref{eq:+} has the bound
$$
c\frac{\langle \theta (N_1+N_2)\rangle}{M},
$$
which completes the proof.
\end{proof}

By Lemma \ref{lem:bilinear0}, we obtain the following proposition.

\begin{proposition}\label{prop:bilinear0}
Let $0<\theta\ll 1<M,~\ell\in\mathbb{Z}$ and $N_1,~N_2>1$.
Suppose that $f_{N_1},~f_{N_2,\ell}\in L^2(\mathbb{R}\times\mathbb{R}\times\mathbb{T})$ are the functions with spatial frequencies $N_1,~N_2$ respectively, and that the Fourier transform of $f_{N_2,\ell}$ is supported on $\{(\tau,\zeta)\in\mathbb{R}\times\mathbb{R}\times\mathbb{Z}\mid  \arg(\zeta)=\ell\theta+O(\theta)\}$.
Then the space-time function
\begin{equation*}
\begin{split}
F(t,z)=\int_{\mathbb{R}^2}\!\int  & e^{-it(\tau_1+\tau_2)-iz\cdot(\zeta_1+\zeta_2)} 1_{|\xi+\xi_2|\sim N_1}1_{\arg(\zeta_2)=\ell\theta+O(\theta)}\\
& \widehat{f_{N_1}}(\tau_1,\zeta_1)\widehat{\overline{f_{N_2,\ell}}}(\tau_2,\zeta_2)\,(d\zeta_1)(d\zeta_2)d\tau_1d\tau_2.
\end{split}
\end{equation*}
has the bound
$$
\|F\|_{L^2(\mathbb{R}\times\mathbb{R}\times\mathbb{T})}\lesssim \frac{\langle \theta (N_1+N_2)\rangle^{1/2}}{M^{1/2}}\|f_{N_1}\|_{X^{0,1/2+}}\|f_{N_2,\ell}\|_{X^{0,1/2+}},
$$
where the constant on the right-hand side is independent of $\theta,~M,~\ell,~N_1,~N_2$. 
\end{proposition}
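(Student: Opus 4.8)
The plan is to deduce the proposition from Lemma \ref{lem:bilinear0} by the standard transfer principle, in exactly the way Proposition \ref{prop:1} and Proposition \ref{prop:bilinearX} follow from their linear counterparts and as indicated in Remark \ref{rem:L4X}. The mechanism is to expand each of the two factors as a superposition, over its modulation variable, of free (resp.\ conjugated free) dispersive profiles, to apply Lemma \ref{lem:bilinear0} at each fixed pair of modulations, and then to reassemble the $X^{0,1/2+}$ norms. I expect no genuine obstacle here: the entire content is carried by the linear bilinear bound already proved, and the only two points needing care are that the various frequency cutoffs survive the modulation decomposition and that the conjugation on the second factor is accounted for correctly.

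First I would set up the two decompositions. For the first factor put $\lambda_1=\tau_1-|\zeta_1|^2$ and let $g_{\lambda_1}$ be the spatial function with $\widehat{g_{\lambda_1}}(\zeta_1)=\widehat{f_{N_1}}(|\zeta_1|^2+\lambda_1,\zeta_1)$; by Plancherel and the definition of the $X^{0,1/2+}$ weight,
$$
\|f_{N_1}\|_{X^{0,1/2+}}^2=c\int_{\mathbb{R}}\langle\lambda_1\rangle^{1+}\|g_{\lambda_1}\|_{L^2(\mathbb{R}\times\mathbb{T})}^2\,d\lambda_1.
$$
For the conjugated factor I would instead use the variable $\nu_2=\tau_2+|\zeta_2|^2$ and the profile $g^{(2)}_{\nu_2}$ with $\widehat{g^{(2)}_{\nu_2}}(\zeta_2)=\widehat{\overline{f_{N_2,\ell}}}(\nu_2-|\zeta_2|^2,\zeta_2)$. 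The sign flip in the modulation is precisely what converts the weight: the change of variables $(\tau_2,\zeta_2)\mapsto(-\tau_2,-\zeta_2)$ applied to the defining integral turns $\langle\tau_2+|\zeta_2|^2\rangle$ acting on $\widehat{\overline{f_{N_2,\ell}}}$ into $\langle\tau_2-|\zeta_2|^2\rangle$ acting on $\widehat{f_{N_2,\ell}}$, so that
$$
c\int_{\mathbb{R}}\langle\nu_2\rangle^{1+}\|g^{(2)}_{\nu_2}\|_{L^2(\mathbb{R}\times\mathbb{T})}^2\,d\nu_2=\|f_{N_2,\ell}\|_{X^{0,1/2+}}^2,
$$
i.e.\ the norm of $f_{N_2,\ell}$ itself, not of its conjugate, is recovered. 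Moreover the sector condition $\arg(\zeta_2)=\ell\theta+O(\theta)$ is inherited by every $g^{(2)}_{\nu_2}$, so the hypothesis of Lemma \ref{lem:bilinear0} on the second factor holds at each fixed $\nu_2$.

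Next I would substitute $\tau_1=|\zeta_1|^2+\lambda_1$ and $\tau_2=-|\zeta_2|^2+\nu_2$ into the phase $e^{-it(\tau_1+\tau_2)}$ of $F$ and factor out the free dispersive phase $e^{-it(|\zeta_1|^2-|\zeta_2|^2)}$ that occurs in Lemma \ref{lem:bilinear0}. Since the cutoffs $1_{|\xi_1+\xi_2|\gtrsim M}$ and $1_{\arg(\zeta_2)=\ell\theta+O(\theta)}$ depend only on the spatial frequencies, they are untouched by this change of variables, and one arrives at
$$
F(t,z)=c\int_{\mathbb{R}^2}e^{-it(\lambda_1+\nu_2)}F_{\lambda_1,\nu_2}(t,z)\,d\lambda_1\,d\nu_2,
$$
where for each fixed $(\lambda_1,\nu_2)$ the slice $F_{\lambda_1,\nu_2}$ is exactly the bilinear expression of Lemma \ref{lem:bilinear0} built from the linear data $g_{\lambda_1}$ and $g^{(2)}_{\nu_2}$. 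Because $|e^{-it(\lambda_1+\nu_2)}|=1$, Minkowski's inequality lets me pull the $L^2(\mathbb{R}\times\mathbb{R}\times\mathbb{T})$ norm inside the double integral, and Lemma \ref{lem:bilinear0} then bounds each slice by $\frac{\langle\theta(N_1+N_2)\rangle^{1/2}}{M^{1/2}}\|g_{\lambda_1}\|_{L^2}\|g^{(2)}_{\nu_2}\|_{L^2}$ with the stated uniform constant.

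Finally I would sum over the modulations, which is the single step where the exponent $1/2+$ is actually used. By Cauchy--Schwarz in each variable,
$$
\int_{\mathbb{R}}\|g_{\lambda_1}\|_{L^2}\,d\lambda_1\le\Bigl(\int_{\mathbb{R}}\langle\lambda_1\rangle^{-(1+)}\,d\lambda_1\Bigr)^{1/2}\Bigl(\int_{\mathbb{R}}\langle\lambda_1\rangle^{1+}\|g_{\lambda_1}\|_{L^2}^2\,d\lambda_1\Bigr)^{1/2}\lesssim\|f_{N_1}\|_{X^{0,1/2+}},
$$
and likewise $\int_{\mathbb{R}}\|g^{(2)}_{\nu_2}\|_{L^2}\,d\nu_2\lesssim\|f_{N_2,\ell}\|_{X^{0,1/2+}}$; the first factor on the right is finite precisely because the modulation exponent $1+$ exceeds $1$, i.e.\ because $b=1/2+$ exceeds $1/2$. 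Combining the last displays gives the claimed bound with a constant independent of $\theta,M,\ell,N_1,N_2$, inherited from Lemma \ref{lem:bilinear0}. As anticipated, the only delicate points are the commutation of the spatial-frequency cutoffs with the modulation decomposition and the sign bookkeeping forced by the conjugation of the second factor, both of which are settled above.
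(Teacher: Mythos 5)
Your proposal is correct and follows essentially the same route as the paper: the paper offers no separate proof, asserting only that the proposition follows from Lemma \ref{lem:bilinear0} by the standard $X^{s,b}$ transfer principle (as in Remark \ref{rem:L4X} and Propositions \ref{prop:1} and \ref{prop:bilinearX}), which is exactly the modulation decomposition, slicewise application of the linear bilinear bound, and Cauchy--Schwarz in the modulation variables that you carry out. Your explicit bookkeeping of the conjugated factor (the reflection $(\tau,\zeta)\mapsto(-\tau,-\zeta)$ turning the weight $\langle\tau_2+|\zeta_2|^2\rangle$ into the $X^{0,1/2+}$ norm of $f_{N_2,\ell}$ itself, with the sector hypothesis passing to each profile) supplies precisely the details the paper leaves implicit.
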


Finally we give the observation for the case of $N_1=N_2=N_0$.
Let us use the formula $|\zeta_1|^2+|\zeta-\zeta_1|^2=2|\zeta_1-\zeta/2|^2+|\zeta|^2/2$.
For each $(\tau,\zeta)\in\mathbb{R}\times\mathbb{R}\times\mathbb{Z}$ satisfying that $\tau-|\zeta|^2/2\le 100$, we have
$$
\int 1_{\{\zeta_1\in\mathbb{R}\times\mathbb{Z}\mid |\zeta_1|\sim N_0,~|\zeta-\zeta_1|\sim N_0,~|\tau-|\zeta_1|^2-|\zeta-\zeta_1|^2|\le  1\}}\,(d\zeta_1)\lesssim 1.
$$
If $(\tau,\zeta)\in\mathbb{R}\times\mathbb{R}\times\mathbb{Z}$ satisfies $\tau-|\zeta|^2/2|\ge 100$, we can write
\begin{equation*}
\begin{split}
& \{\zeta_1\in\mathbb{R}\times\mathbb{Z}\mid |\zeta_1|\sim N_0,~|\zeta-\zeta_1|\sim N_0,~|\tau-|\zeta_1|^2-|\zeta-\zeta_1|^2|\le 1\}\\
&\subset
\bigcup_{|\eta_1|=0}^{O\left(\sqrt{\tau-|\zeta|^2/2}\right)}\{\xi_1\in\mathbb{R}\mid \zeta_1=(\xi_1,\eta_1)\in\mathbb{R}\times\mathbb{Z},~
|2|\zeta_1-\zeta/2|^2-\tau+|\zeta|^2/2|\le 1\}.
\end{split}
\end{equation*}
Thus
\begin{equation*}
\begin{split}
& 
\left|\{\zeta_1\in\mathbb{R}\times\mathbb{Z}\mid |\zeta_1|\sim N_0,~|\zeta-\zeta_1|\sim N_0,~|\tau-|\zeta_1|^2-|\zeta-\zeta_1|^2|\ll 1\}\right|\\
\lesssim & \sum_{\eta_1=0}^{O\left(\sqrt{\tau-|\zeta|^2/2}\right)}\left(\sqrt{(\sqrt{\tau-|\zeta|^2/2+1})^2-\eta_1^2}-\sqrt{(\sqrt{\tau-|\zeta|^2/2-1})^2-\eta_1^2}\right) \\
\lesssim & \left(\tau-|\zeta|^2/2\right)\sum_{\eta_1=0}^{O\left(\sqrt{\tau-|\zeta|^2/2}\right)}\frac{1}{\sqrt{(\tau-|\zeta|^2/2)^2-\eta_1^2}}\\
\lesssim & \int_0^1\frac{dt}{\sqrt{1-t^2}}\lesssim 1.
\end{split}
\end{equation*}
Collecting these estimates, we arrive at the following
\begin{equation}\label{eq:N_1N_2}
\sup_{(\tau,\zeta)\in\mathbb{R}\times\mathbb{R}\times\mathbb{Z}}\int 1_{\{\zeta_1\in\mathbb{R}\times\mathbb{Z}\mid |\zeta_1|\sim N_0,~|\zeta-\zeta_1|\sim N_0,~|\tau-|\zeta_1|^2-|\zeta-\zeta_1|^2|\le 1\}}\,(d\zeta_1)\lesssim 1.
\end{equation}
With this nice estimate, we revisit the proof of Lemma \ref{lem:bilinear}.
By replacing \eqref{eq:enough1} with \eqref{eq:N_1N_2}, it follows that for any $\phi_{N_0}\in L^2(\mathbb{R}\times\mathbb{T})$ with spatial frequencies $N_0$
\begin{equation}\label{eq:N_1=N_2}
\|e^{-it\Delta}\phi_{N_0}\|_{L^4(\mathbb{R}\times\mathbb{R}\times\mathbb{T})}\lesssim \|\phi_{N_0}\|_{L^2(\mathbb{R}\times\mathbb{T})}.
\end{equation}
As dealt in Proposition \ref{prop:bilinearX}, we immediately obtain the following proposition.
\begin{proposition}
For a space-time function $f$, we have
\begin{equation}\label{eq:4}
\|f\|_{L^{4}(\mathbb{R}\times\mathbb{R}\times\mathbb{T})}\lesssim \|f\|_{X^{0+,1/2+}}
\end{equation}
and
\begin{equation}\label{eq:4+}
\|f\|_{L^{4+}(\mathbb{R}\times\mathbb{R}\times\mathbb{T})}\lesssim \|f\|_{X^{0+,1/2+}}.
\end{equation}
Moreover, for any $f_{N_0}$ with spatial frequencies $N_0$
\begin{equation}\label{eq:4-}
\|f_{N_0}\|_{L^{4}(\mathbb{R}\times\mathbb{R}_0\times\mathbb{T})}\lesssim \|f_{N_0}\|_{X^{0,1/2+}}
\end{equation}
and
\begin{equation}\label{eq:4--}
\|f_{N_0}\|_{L^{4-}(\mathbb{R}\times\mathbb{R}\times\mathbb{T})}\lesssim \|f_{N_0}\|_{X^{0,1/2-}}.
\end{equation}
\end{proposition}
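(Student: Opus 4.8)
The plan is to prove the two frequency-localized estimates \eqref{eq:4-} and \eqref{eq:4--} first, since \eqref{eq:N_1=N_2} is exactly the frequency-localized $L^4$ bound for the free evolution with \emph{no} derivative loss, and then to remove the localization in \eqref{eq:4} and \eqref{eq:4+} by a dyadic frequency summation that consumes the $0+$ derivatives. First I would upgrade \eqref{eq:N_1=N_2} to \eqref{eq:4-} by the transference principle already invoked in Remark \ref{rem:L4X} and Proposition \ref{prop:bilinearX}. Writing $\widehat{f_{N_0}}(\tau,\zeta)=\widehat{F}(\tau-|\zeta|^2,\zeta)$ and peeling off the modulation variable $\lambda=\tau-|\zeta|^2$, one represents $f_{N_0}$ as a superposition $\int e^{it\lambda}\,(e^{-it\Delta}g_\lambda)\,d\lambda$ of free evolutions with spatial frequencies $\sim N_0$. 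Minkowski's inequality in $\lambda$, the unit modulus of $e^{it\lambda}$, and \eqref{eq:N_1=N_2} then bound $\|f_{N_0}\|_{L^4}$ by $\int\|g_\lambda\|_{L^2}\,d\lambda$, and a Cauchy--Schwarz in $\lambda$ against the weight $\langle\lambda\rangle^{1/2+}$ produces precisely $\|f_{N_0}\|_{X^{0,1/2+}}$, the spare factor $\langle\lambda\rangle^{-1/2-}$ being square-integrable. This gives \eqref{eq:4-}.

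For \eqref{eq:4--} I would interpolate the $X^{0,1/2+}\to L^4$ bound just obtained against the trivial isometry $X^{0,0}=L^2(\mathbb{R}\times\mathbb{R}\times\mathbb{T})\to L^2(\mathbb{R}\times\mathbb{R}\times\mathbb{T})$. Complex interpolation in the modulation index moves the pair $(b,1/q)$ along the segment joining $(1/2+,1/4)$ to $(0,1/2)$, where $b=\theta(1/2+)$ and $1/q=1/2-\theta/4$; taking the interpolation parameter $\theta$ slightly below $1$ yields $b=1/2-$ and $q=4-$, which is \eqref{eq:4--}.

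To pass from the single-frequency bounds to the global estimate \eqref{eq:4}, I would exploit $\|f\|_{L^4}^2=\||f|^2\|_{L^2}$ and expand $|f|^2=\sum_{N_1,N_2}f_{N_1}\overline{f_{N_2}}$ over dyadic blocks. The triangle inequality in $L^2$, Hölder's inequality $\|f_{N_1}\overline{f_{N_2}}\|_{L^2}\le\|f_{N_1}\|_{L^4}\|f_{N_2}\|_{L^4}$, and \eqref{eq:4-} reduce matters to $\big(\sum_N\|f_N\|_{X^{0,1/2+}}\big)^2$. Writing $\|f_N\|_{X^{0,1/2+}}=N^{-0+}\|f_N\|_{X^{0+,1/2+}}$ and applying Cauchy--Schwarz over the dyadic indices, the convergent factor $\sum_N N^{-0+}$ is absorbed and the remaining square sum is comparable to $\|f\|_{X^{0+,1/2+}}^2$ by frequency orthogonality in the $X^{s,b}$ norm; this yields \eqref{eq:4}. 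For \eqref{eq:4+} I would first sharpen the localized bound by Bernstein's inequality in the spatial variables, $\|f_{N_0}\|_{L^{4+}}\lesssim N_0^{0+}\|f_{N_0}\|_{L^4}\lesssim\|f_{N_0}\|_{X^{0+,1/2+}}$, and then repeat the expansion of $|f|^2$, now estimated in $L^{2+}$, using Hölder and the same dyadic Cauchy--Schwarz; here both the Bernstein loss $N_0^{0+}$ and the summation loss are absorbed into the single $0+$ on the right, provided the gain $4\to 4+$ is chosen small enough relative to it.

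I expect the one genuinely delicate point to be the dyadic frequency summation underlying \eqref{eq:4} and \eqref{eq:4+}: because $L^4$ and $L^{4+}$ are not Hilbert spaces, there is no free orthogonality across frequency blocks, and it is exactly the bilinear expansion of $|f|^2$ combined with the $0+$ derivative that restores summability and explains the appearance of the $0+$ regularity loss. By contrast, the transference step leading to \eqref{eq:4-} and the interpolation leading to \eqref{eq:4--} are routine once \eqref{eq:N_1=N_2} is available.
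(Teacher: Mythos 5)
Your treatment of \eqref{eq:4-}, \eqref{eq:4--} and \eqref{eq:4} is correct and essentially the paper's own route: \eqref{eq:4-} is transference of \eqref{eq:N_1=N_2} exactly as in Remark \ref{rem:L4X}, \eqref{eq:4--} is interpolation against the identity $\|f_{N_0}\|_{L^2}=\|f_{N_0}\|_{X^{0,0}}$, and \eqref{eq:4} is a dyadic summation in which the $0+$ derivative pays for Cauchy--Schwarz over the blocks (your detour through $\||f|^2\|_{L^2}$ is harmless but unnecessary; the triangle inequality $\|f\|_{L^4}\le\sum_N\|f_N\|_{L^4}$ already suffices).

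The genuine gap is the Bernstein step you use for \eqref{eq:4+}. The inequality
$$
\|f_{N_0}\|_{L^{4+}(\mathbb{R}\times\mathbb{R}\times\mathbb{T})}\lesssim N_0^{0+}\|f_{N_0}\|_{L^{4}(\mathbb{R}\times\mathbb{R}\times\mathbb{T})}
$$
is false: $f_{N_0}$ is frequency-localized only in the \emph{spatial} variables, so Bernstein can upgrade integrability only in $z$, not in $t$; the modulation support of $f_{N_0}$ is unrestricted, and there is no analogue of Bernstein in the time variable. Concretely, take $f_{N_0}(t,z)=g(t)h(z)$ with $\widehat{h}$ supported where $|\zeta|\sim N_0$ and $g\in L^4_t\setminus L^{4+}_t$ (e.g. $g(t)=|t|^{-1/(4+)}$ near $t=0$, compactly supported); then the right-hand side is finite while the left-hand side is infinite. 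The target localized bound $\|f_{N_0}\|_{L^{4+}}\lesssim N_0^{0+}\|f_{N_0}\|_{X^{0,1/2+}}$ \emph{is} true, but the gain in time integrability must come from the modulation weight $\langle\tau-|\zeta|^2\rangle^{1/2+}$, not from spatial frequency localization. This is exactly what the paper's proof supplies: it interpolates \eqref{eq:4} with the Sobolev-type embedding $\|f\|_{L^{\infty}(\mathbb{R}\times\mathbb{R}\times\mathbb{T})}\lesssim\|f\|_{X^{1+,1/2+}}$, which controls $L^\infty$ in \emph{all} variables; taking the interpolation parameter close to the $L^4$ endpoint gives $L^{4+}\lesssim X^{0+,1/2+}$. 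If you replace your Bernstein step by this interpolation (either globally, as in the paper, or blockwise, interpolating \eqref{eq:4-} against $\|f_{N_0}\|_{L^\infty}\lesssim N_0^{1+}\|f_{N_0}\|_{X^{0,1/2+}}$ and then summing as you did for \eqref{eq:4}), the rest of your argument goes through unchanged.
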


\begin{proof}
The estimate \eqref{eq:N_1=N_2} implies \eqref{eq:4}.
Interpolating \eqref{eq:4} and the classical Sobolev embedding
$$
\|f\|_{L^{\infty}(\mathbb{R}\times\mathbb{R}\times\mathbb{T})}\lesssim \|f\|_{X^{1+,1/2+}},
$$
we obtain \eqref{eq:4+}.
The estimate \eqref{eq:4-} follows from \eqref{eq:N_1=N_2}.
The estimate \eqref{eq:4--} follows from interpolation \eqref{eq:4-} and the trivial estimate $\|f_{N_0}\|_{L^2(\mathbb{R}\times\mathbb{R}\times\mathbb{T})}=\|f_{N_0}\|_{X^{0,0}}$.
\end{proof}

\section{Reduction of the energy}\label{sec:reductionenergy}

In this section, we modify the energy function via normal form reduction argument.
To get a better energy function associated to $H^s$ solution, we make use of a spatial Fourier multiplier operator $I:H^s(\mathbb{R}\times\mathbb{T})\to H^1(\mathbb{R}\times\mathbb{T})$ by a smooth monotone multiplier $m:\mathbb{R}\times\mathbb{Z}\to (0,\infty)$ satisfying 
$$
m(\zeta)=
\begin{cases}
1, & |\zeta|< N,\\
\left(\frac{|\zeta|}{N}\right)^{s-1}, & |\zeta|>2N,
\end{cases}
$$
where $N>1$ is to be determined later, that is, in Section \ref{sec:proofTheorem}. 
We point out that the operator $I$ maps from $H^s(\mathbb{R}\times\mathbb{T})$ onto $H^1(\mathbb{R}\times\mathbb{T})$ as it can be seen in the following estimate.
For $s>1$,
\begin{equation}\label{eq:HIH}
\|f\|_{H^s(\mathbb{R}\times\mathbb{T})}\lesssim N^{s-1}\|If\|_{H^1(\mathbb{R}\times\mathbb{T})}\lesssim N^{s-1}\|f\|_{H^s(\mathbb{R}\times\mathbb{T})}.
\end{equation}

It is convenient to introduce some notation for the multilinear expressions.
If $k$ is an integer, we write
$$
\int M(\zeta_1,\ldots,\zeta_k)1_{\zeta_1+\cdots +\zeta_k=0}(d\zeta_1)\ldots (d\zeta_k)=
\int_*  M(\zeta_1,\ldots,\zeta_k).
$$
Given the multiplier $I$, we provide the definition of the modified energy as 
\begin{equation}\label{eq:modified-e}
\begin{split}
E_I[u](t)=& \frac{1}{2}\int_* |\zeta_1||\zeta_2|m(\zeta_1)m(\zeta_2)\widehat{u}(t,\zeta_1)\widehat{\overline{u}}(t,\zeta_2)\\
& +\frac{1}{4(2\pi)^2}\int_* \Lambda_4(\zeta_1,\zeta_2,\zeta_3,\zeta_4) \widehat{u}(t,\zeta_1)\widehat{\overline{u}}(t,\zeta_2)\widehat{u}(t,\zeta_3)\widehat{\overline{u}}(t,\zeta_4),
\end{split}
\end{equation}
with
\begin{equation*}
\begin{split}
\Lambda_4(\zeta_1,\zeta_2,\zeta_3,\zeta_4)=& \frac{m(\zeta_1)^2|\zeta_1|^2-m(\zeta_2)^2|\zeta_2|^2+m(\zeta_3)^2|\zeta_3|^2-m(\zeta_4)^2|\zeta_4|^2}{|\zeta_1|^2-|\zeta_2|^2+|\zeta_3|^2-|\zeta_4|^2}\\
& \times 1_{|\cos\angle(\zeta_{12},\zeta_{14})|>\theta_0~\mathrm{or}~\max\{|\zeta_j|\mid 1\le j\le 4\}\ll N}(\zeta_1,\zeta_2,\zeta_3,\zeta_4)
\end{split}
\end{equation*}
and
$$
\theta_0=\theta_0(\zeta_1,\zeta_2,\zeta_3,\zeta_4)=\frac{1}{1+|\zeta_1|+|\zeta_2|+|\zeta_3|+|\zeta_4|}.
$$
In the above, we used the notations $\zeta_{ij}=\zeta_i+\zeta_j$.

The following two lemmas state some elementary inequalities for the function of $\Lambda_4$. 

\begin{lemma}\label{lem:m}
Let $\zeta_j\in\mathbb{R}\times\mathbb{Z}~(1\le j\le 4)$ with $|\zeta_1|\gtrsim \max\{|\zeta_j|\mid 2\le j\le 4\}$ with $\zeta_1+\zeta_2+\zeta_3+\zeta_4=0$.
Then
\begin{equation*}
\begin{split}
& |m(\zeta_1)^2|\zeta_1|^2-m(\zeta_2)^2|\zeta_2|^2+m(\zeta_3)^2|\zeta_3|^2-m(\zeta_4)^2|\zeta_4|^2|\\
 \lesssim &
m(\zeta_1)^2\times
\begin{cases}
|\zeta_1|^2, & \min\{|\zeta_{12}|,|\zeta_{14}|\}\gtrsim |\zeta_1|,\\
|\zeta_{12}||\zeta_{14}|, & \min\{|\zeta_{12}|,|\zeta_{14}|\}\ll |\zeta_1|.
\end{cases}
\end{split}
\end{equation*}
\end{lemma}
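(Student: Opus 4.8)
The plan is to write $\sigma(\zeta):=m(\zeta)^2|\zeta|^2$, a radial and hence \emph{even} function on $\mathbb{R}\times\mathbb{Z}$ that extends smoothly to all of $\mathbb{R}^2$, and to estimate the quantity $\sigma(\zeta_1)-\sigma(\zeta_2)+\sigma(\zeta_3)-\sigma(\zeta_4)$. The first case needs nothing beyond the triangle inequality: since $m$ is monotone and $|\zeta_j|\lesssim|\zeta_1|$ for $2\le j\le 4$, each term obeys $|\sigma(\zeta_j)|=m(\zeta_j)^2|\zeta_j|^2\lesssim m(\zeta_1)^2|\zeta_1|^2$, so the sum is $\lesssim m(\zeta_1)^2|\zeta_1|^2$. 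This crude bound in fact holds with no hypothesis on the $\zeta_{1j}$, and it is the relevant one precisely when $\min\{|\zeta_{12}|,|\zeta_{14}|\}\gtrsim|\zeta_1|$.

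The core of the lemma is the second case, where I would exploit the near-resonant structure $\zeta_2\approx-\zeta_1,\ \zeta_4\approx-\zeta_1,\ \zeta_3\approx\zeta_1$ through a discrete second difference. Setting $w=\zeta_1$, $p=\zeta_{12}$, $q=\zeta_{14}$ and using $\zeta_1+\zeta_2+\zeta_3+\zeta_4=0$ together with the evenness of $\sigma$, I would write $\zeta_2=-(w-p)$, $\zeta_4=-(w-q)$, $\zeta_3=w-p-q$, so that
\[
\sigma(\zeta_1)-\sigma(\zeta_2)+\sigma(\zeta_3)-\sigma(\zeta_4)=\sigma(w)-\sigma(w-p)+\sigma(w-p-q)-\sigma(w-q).
\]
The right-hand side is a mixed second difference, which I would express through the Hessian by two applications of the fundamental theorem of calculus along segments in $\mathbb{R}^2$,
\[
\sigma(w)-\sigma(w-p)+\sigma(w-p-q)-\sigma(w-q)=\int_0^1\!\!\int_0^1 p^{\top}\,\mathrm{Hess}\,\sigma(w-sp-tq)\,q\,ds\,dt,
\]
so that the expression is bounded by $|\zeta_{12}|\,|\zeta_{14}|\,\sup_{s,t\in[0,1]}\|\mathrm{Hess}\,\sigma(w-sp-tq)\|$.

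It remains to bound the Hessian by $m(\zeta_1)^2$ uniformly on this segment. Here I would use the piecewise description of $\sigma$: it equals $|\zeta|^2$ for $|\zeta|<N$, equals $N^{-2(s-1)}|\zeta|^{2s}$ for $|\zeta|>2N$, and interpolates smoothly in between, together with the symbol-type derivative bounds on $m$ (namely $|\partial^\alpha m(\zeta)|\lesssim m(\zeta)\langle\zeta\rangle^{-|\alpha|}$, which the stated monotone multiplier can be taken to satisfy). This yields $\|\mathrm{Hess}\,\sigma(v)\|\lesssim m(v)^2$ for every $v$: the large-frequency regime gives $\|\mathrm{Hess}\,\sigma(v)\|\sim N^{-2(s-1)}|v|^{2s-2}=m(v)^2$, while the low-frequency and transition regimes give $\|\mathrm{Hess}\,\sigma(v)\|\lesssim 1\sim m(v)^2$. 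Since $|\zeta_{12}|\lesssim|\zeta_1|$ and $|\zeta_{14}|\lesssim|\zeta_1|$, every $v=w-sp-tq$ with $s,t\in[0,1]$ satisfies $|v|\lesssim|\zeta_1|$, so the slowly-varying (doubling) behaviour of $m$ gives $m(v)\lesssim m(\zeta_1)$ and hence $\|\mathrm{Hess}\,\sigma(v)\|\lesssim m(\zeta_1)^2$. Combining, the expression is $\lesssim m(\zeta_1)^2|\zeta_{12}||\zeta_{14}|$, as claimed.

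I expect the main obstacle to be the uniform Hessian estimate across the transition annulus $|\zeta|\sim N$ and at points $v$ where cancellation makes $|v|$ far smaller than $|\zeta_1|$; both are handled by the explicit form of $\sigma$ and the symbol bounds on the derivatives of $m$, which I would record at the outset. A minor point to note is that, although the calculation lives on the lattice $\mathbb{R}\times\mathbb{Z}$, all intermediate integrations take place along segments of $\mathbb{R}^2$, where $\sigma$ is a genuinely $C^2$ function (using $s>1$), so the two invocations of the fundamental theorem of calculus are legitimate.
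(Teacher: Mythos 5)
Your proof is correct, and its core device --- rewriting the alternating sum, via evenness of $\sigma(\zeta)=m(\zeta)^2|\zeta|^2$ and the constraint $\zeta_1+\zeta_2+\zeta_3+\zeta_4=0$, as the mixed second difference $\sigma(w)-\sigma(w-p)+\sigma(w-p-q)-\sigma(w-q)$ with $w=\zeta_1$, $p=\zeta_{12}$, $q=\zeta_{14}$, then bounding it by $|p|\,|q|$ times a Hessian supremum --- is exactly the paper's argument for its case $\max\{|\zeta_{12}|,|\zeta_{14}|\}\ll|\zeta_1|$ (the paper writes the same double integral $\int_0^1\!\int_0^1(\zeta_{12}\cdot\nabla)(\zeta_{14}\cdot\nabla)\sigma(\zeta_1-s\zeta_{12}-t\zeta_{14})\,ds\,dt$), and your first case coincides with the paper's monotonicity bound. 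Where you genuinely diverge is the remaining regime $\min\{|\zeta_{12}|,|\zeta_{14}|\}\ll|\zeta_1|\lesssim\max\{|\zeta_{12}|,|\zeta_{14}|\}$: the paper handles it by a separate argument, pairing $(\zeta_1,\zeta_4)$ and $(\zeta_2,\zeta_3)$ and applying two single-variable fundamental-theorem-of-calculus integrals along the short directions $\zeta_{14}$ and $\zeta_{23}=-\zeta_{14}$, whereas you subsume it into the second-difference bound by observing that $\|\mathrm{Hess}\,\sigma(v)\|\lesssim m(v)^2\lesssim m(\zeta_1)^2$ holds uniformly for all $|v|\lesssim|\zeta_1|$, even when the long side of the parallelogram drags $v$ near the origin or across the transition annulus $|v|\sim N$. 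This unification is legitimate precisely because $s>1$ makes $m$ increasing, so monotonicity plus doubling gives $m(v)\lesssim m(\zeta_1)$ on the whole integration region; in the decreasing-multiplier setting of \cite{ckstt1,ckstt2} that the paper is adapting, $m(v)^2$ can exceed $m(\zeta_1)^2$ when $|v|\ll|\zeta_1|$, and the case split the paper performs is then forced. So your route is slightly more economical, at the modest price of recording explicitly the symbol-type bounds $|\partial^{\alpha}m(\zeta)|\lesssim m(\zeta)\langle\zeta\rangle^{-|\alpha|}$ on the transition region --- bounds the paper also uses implicitly whenever it differentiates $m^2|\cdot|^2$ along segments.
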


\begin{proof}
We repeat the proof of Lemma 4.1 in \cite{ckstt2}.
Since the function $m(\zeta)^2|\zeta|^2$ is monotone increasing, we can have
$$
\left|m(\zeta_1)^2|\zeta_1|^2-m(\zeta_2)^2|\zeta_2|^2+m(\zeta_3)^2|\zeta_3|^2-m(\zeta_4)^2|\zeta_4|^2\right|\lesssim 
m(\zeta_1)^2|\zeta_1|^2.
$$
Hence, we reduce the problem to the case $\min\{|\zeta_{12}|,|\zeta_{14}|\}\ll |\zeta_1|$ in the following.
 
If $\max\{|\zeta_{12}|,|\zeta_{14}|\}\ll |\zeta_1|$, we proceed as
\begin{equation*}
\begin{split}
& \left|m(\zeta_1)^2|\zeta_1|^2-m(\zeta_2)^2|\zeta_2|^2+m(\zeta_3)^2|\zeta_3|^2-m(\zeta_4)^2|\zeta_4|^2\right|\\
= & \left|\int_0^1\!\!\int_0^1(\zeta_{12}\cdot \nabla)(\zeta_{14}\cdot\nabla)m(\zeta_1-s\zeta_{12}-t\zeta_{14})^2|\zeta_1-s\zeta_{12}-t\zeta_{14}|^2\,dsdt\right|\\
\lesssim  &m(\zeta_1)^2|\zeta_{12}||\zeta_{14}|
\end{split}
\end{equation*}

Finally we observe the case when $\min\{|\zeta_{12}|,|\zeta_{14}|\}\ll |\zeta_1|\lesssim  \max\{|\zeta_{12}|,|\zeta_{14}|\}$.
By symmetry, we suppose $|\zeta_{14}|\ll |\zeta_1|\sim |\zeta_{12}|$.
Also we may suppose $|\zeta_2|\gg |\zeta_{23}|$, since the proof for the case $|\zeta_2|\lesssim  |\zeta_{23}|$ follows similarly.
In this case, we see
\begin{equation*}
\begin{split}
& \left|m(\zeta_1)^2|\zeta_1|^2-m(\zeta_2)^2|\zeta_2|^2+m(\zeta_3)^2|\zeta_3|^2-m(\zeta_4)^2|\zeta_4|^2\right|\\
= & \left|\int_0^1(\zeta_{14}\cdot\nabla)\left( m(\zeta_1-t\zeta_{14})^2|\zeta_1-t\zeta_{14}|^2+ m(\zeta_2-t\zeta_{23})^2|\zeta_2-t\zeta_{23}|^2\right)\,dt\right|\\
\lesssim & m(\zeta_1)^2|\zeta_1||\zeta_{14}|\\
\sim& m(\zeta_1)^2|\zeta_{12}||\zeta_{14}|.
\end{split}
\end{equation*}
This concludes the proof of the lemma.
\end{proof}

Next, we prove the following preliminary lemma, which is due to the estimate for the second term on the right-hand side of  \eqref{eq:modified-e}.

\begin{lemma}\label{lem:error}
We have
\begin{equation*}
\left|E_I[f]-E[If]\right|\lesssim \frac{1}{N^{1-}}\|If\|_{H^1(\mathbb{R}\times\mathbb{T})}^4.
\end{equation*}
\end{lemma}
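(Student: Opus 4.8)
The plan is to pass to the Fourier side and reduce the claim to a single fixed-time quadrilinear estimate. Writing $u=f$ and using the constraint $\zeta_1+\zeta_2=0$ inside $\int_*$, the quadratic term of $E_I[f]$ becomes $\tfrac12\int |\zeta|^2 m(\zeta)^2|\widehat f(\zeta)|^2\,(d\zeta)=\tfrac12\|\nabla If\|_{L^2}^2$, which cancels exactly against the kinetic part of $E[If]$; likewise the potential part $\tfrac14\|If\|_{L^4}^4$ is the quadrilinear $\int_*$ with symbol $\prod_{j=1}^4 m(\zeta_j)$. Hence the difference is purely quartic,
$$
E_I[f]-E[If]=\frac{1}{4(2\pi)^2}\int_* M_4\,\widehat f(\zeta_1)\widehat{\overline f}(\zeta_2)\widehat f(\zeta_3)\widehat{\overline f}(\zeta_4),\qquad M_4:=\Lambda_4-\prod_{j=1}^4 m(\zeta_j),
$$
and it remains to bound this by $N^{-1+}\|If\|_{H^1}^4$. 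The first key observation is that $M_4\equiv 0$ whenever $\max_j|\zeta_j|\ll N$: there every $m(\zeta_j)=1$, the indicator defining $\Lambda_4$ is active, numerator and denominator of $\Lambda_4$ coincide, so $\Lambda_4=1=\prod_j m(\zeta_j)$. Thus the whole integral is supported on $\max_j|\zeta_j|\gtrsim N$, which is the source of the gain.

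Next I would analyze $M_4$ on its support. Ordering $|\zeta_1|=\max_j|\zeta_j|\gtrsim N$, the constraint $\zeta_1+\cdots+\zeta_4=0$ forces $|\zeta_1|\le|\zeta_2|+|\zeta_3|+|\zeta_4|$, so the second-largest frequency is comparable to $|\zeta_1|$ and $m(\zeta_1)\sim m(\zeta_2)$. The decisive algebraic fact is the resonance factorization: with $\sum_j\zeta_j=0$,
$$
|\zeta_1|^2-|\zeta_2|^2+|\zeta_3|^2-|\zeta_4|^2=2\,\zeta_{12}\cdot\zeta_{14}=2|\zeta_{12}||\zeta_{14}|\cos\angle(\zeta_{12},\zeta_{14}).
$$
On the part of the support where $\Lambda_4\neq0$ the cutoff gives $|\cos\angle(\zeta_{12},\zeta_{14})|>\theta_0\sim(\max_j|\zeta_j|)^{-1}$, hence the denominator satisfies $|\zeta_1|^2-|\zeta_2|^2+|\zeta_3|^2-|\zeta_4|^2\gtrsim\theta_0|\zeta_{12}||\zeta_{14}|$; combined with the numerator bound of Lemma \ref{lem:m} this controls $\Lambda_4$. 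Away from a thin angular sector, i.e. when $|\cos\angle(\zeta_{12},\zeta_{14})|\sim1$, one obtains $|M_4|\lesssim m(\zeta_1)^4$ with \emph{no} power of the large frequency, which is genuine smallness once measured against $\prod_j\langle\zeta_j\rangle m(\zeta_j)\gtrsim(\max_j|\zeta_j|)^2 m(\zeta_1)^2\gtrsim N^2 m(\zeta_1)^2$. In the complementary thin sector (which also contains the resonant region where $|\cos|\le\theta_0$, so that $\Lambda_4=0$ and $M_4=-\prod_j m(\zeta_j)$) the pointwise size is larger, of order $m(\zeta_1)^2\max_j|\zeta_j|$, but the angular measure of that sector is only $O(\theta_0)$.

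The final step is the summation. I would insert these bounds into $\int_*$, decompose dyadically in $\{|\zeta_j|\sim N_j\}$ and angularly as in Section \ref{sec:bilinear}, pair the four inputs into two bilinear products, and estimate by H\"older together with Bernstein and the elementary inequality $\|f\|_{L^2}\le\|If\|_{H^1}$, placing the full weight $\langle\zeta\rangle m(\zeta)$ only on the two comparable large frequencies. In the bulk region the leftover factor $(\max_j|\zeta_j|)^{-2}\lesssim N^{-2}$ more than pays for the required $N^{-1+}$ and yields a convergent dyadic sum dominated by the endpoint scale $\max_j|\zeta_j|\sim N$. In the thin-angle region the angular smallness $\theta_0\sim(\max_j|\zeta_j|)^{-1}$ supplies the missing gain, cashed in through a Cauchy--Schwarz measure estimate over the thin arc in the spirit of \eqref{eq:enough1}, again summing to $N^{-1+}\|If\|_{H^1}^4$.

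The main obstacle I expect is precisely this thin-angle (near-resonant) region: there the pointwise multiplier is too large on its own, and one must genuinely exploit the angular concentration, so the comparability $N_1\sim N_2$ of the two top frequencies and the refined angular decomposition become essential. A secondary but unavoidable technical point is to arrange the derivative distribution so that each of the four factors is measured in a norm dominated by $\|If\|_{H^1}$ — full weight on the large frequencies, only $L^2$/Bernstein weight on the others — so that no uncontrolled high-frequency $L^4$ norm ever enters the estimate.
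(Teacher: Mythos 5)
Your overall architecture is the same as the paper's: cancel the quadratic terms, write $E_I[f]-E[If]$ as the quartic form with multiplier $M_4=\Lambda_4-\prod_{j}m(\zeta_j)$, observe that $M_4$ vanishes unless $\max_j|\zeta_j|\gtrsim N$, control the numerator by Lemma \ref{lem:m}, and close with H\"older placing the weight $\langle\zeta\rangle m(\zeta)$ on the two comparable high frequencies. Where you diverge from the paper is the bulk/thin-angle dichotomy, and both halves of that deviation have concrete problems.

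The thin-angle half is the serious one. You claim that near the resonant set the pointwise bound of order $m(\zeta_1)^2\max_j|\zeta_j|$ is ``too large on its own'' and that one must exploit the angular concentration through ``a Cauchy--Schwarz measure estimate over the thin arc in the spirit of \eqref{eq:enough1}.'' Neither part of this is right. First, no angular gain is needed: the paper's entire proof consists of the uniform crude bound $|\Lambda_4|\lesssim m(N_1)^2/\theta_0$ together with $L^2\times L^2\times L^\infty\times L^\infty$ H\"older ($L^\infty\hookleftarrow H^{1+}$ on the low frequencies). Since $\theta_0^{-1}\sim N_1$ is cancelled by one of the two factors $N_1^{-1},N_2^{-1}$ coming from measuring $f_{N_1},f_{N_2}$ in $\|If_{N_j}\|_{H^1}$, the dyadic block contributes $\sim 1/(N_2\,m(N_3)m(N_4))\lesssim 1/N$, which after summation is exactly the claimed $N^{-1+}$. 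Second, the step you propose instead is not justified: \eqref{eq:enough1} is a space-time estimate whose measure gain comes from the delta function on the Schr\"odinger surface, and it has no fixed-time analogue here. At fixed time the constraint $|\cos\angle(\zeta_{12},\zeta_{14})|\le\epsilon$ does not factor through any bilinear pairing of the four inputs (for each of the pairings $(12)(34)$, $(13)(24)$, $(14)(23)$ it couples an internal variable of one pair to a variable of the other), so one cannot simply cash it by Cauchy--Schwarz on a convolution; and on $\mathbb{R}\times\mathbb{Z}$ angular-measure gains are capped by lattice lines --- this is precisely why Lemma \ref{lem:bilinear} carries $\langle\theta N_2\rangle$ rather than $\theta N_2$. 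Since $\theta_0\max_j|\zeta_j|\sim 1$, the clean $O(\theta_0)$ measure gain you invoke sits exactly in the regime where that lattice obstruction bites.

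There is also a quantitative slip in your bulk region. You bound $|M_4|\lesssim m(\zeta_1)^4$ and measure it against $\prod_j\langle\zeta_j\rangle m(\zeta_j)\gtrsim (\max_j|\zeta_j|)^2m(\zeta_1)^2$, retaining only $m(\zeta_1)^2$ in the denominator. The resulting factor is $m(N_1)^2/N_1^2=N_1^{2s-4}/N^{2s-2}$, whose dyadic sum over $N_1\gtrsim N$ diverges for $s>2$; the lemma is claimed for all $s>1$. The repair is to keep all four weights: in the bulk $|\Lambda_4|\lesssim m(\zeta_1)^2\le \prod_j m(\zeta_j)$ (using $m\ge1$ and $m(\zeta_1)\sim m(\zeta_2)$), so dividing by the full product $\prod_j\langle\zeta_j\rangle m(\zeta_j)$ gives $1/(N_1N_2\langle N_3\rangle\langle N_4\rangle)$ uniformly in $s$. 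With these two repairs --- crude bound in place of the angular step, and full weights in the bulk --- your argument collapses to the paper's proof; as written, the thin-angle step is a genuine gap.
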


\begin{proof}
Recall that
$$
\|\nabla If\|_{L^2(\mathbb{R}\times\mathbb{T})}^2= \int_* |\zeta_1||\zeta_2|m(\zeta_1)m(\zeta_2)\widehat{f}(\zeta_1)\widehat{\overline{f}}(\zeta_2).
$$
Then it suffices to show
\begin{equation}\label{eq:Lambda_4-minus}
\begin{split}
& \left|\int_* \left(\Lambda_4(\zeta_1,\zeta_2,\zeta_3,\zeta_4) -m(\zeta_1)m(\zeta_2)m(\zeta_3)m(\zeta_4)\right) \widehat{f}(\zeta_1)\widehat{\overline{f}}(\zeta_2)\widehat{f}(\zeta_3)\widehat{\overline{f}}(\zeta_4)\right|\\
 \lesssim & \frac{1}{N^{1-}}\|If\|_{H^1(\mathbb{R}\times\mathbb{T})}^4.
\end{split}
\end{equation}
By a Littlewood-Paley decomposition, we restrict each function to a dyadic frequency band $|\zeta_j|\sim N_j$ and will sum in the $N_j$ at the end of the argument.
In this content, we write $f_{N_j}$ to denote the function of a dyadic frequency band $|\zeta|\sim N_j$. 
Without of loss generality, we may assume $N_1=\max\{N_j\mid 1\le j\le 4\}\sim \max\{N_j\mid 2\le j\le 4\}$.

In the case when $N_1\ll N$, we can write $\Lambda_4(\zeta_1,\zeta_2,\zeta_3,\zeta_4)=1$.
Therefore we assume $N_1\gtrsim N$.
%
By Lemma \ref{lem:m}, we note the following piecewise estimate
$$
|\Lambda_4(\zeta_1,\zeta_2,\zeta_3,\zeta_4)|\lesssim \frac{m(N_1)^2}{\theta_0}.
$$
For simplicity, we assume $N_1\sim N_2\gtrsim N+N_3+N_4+N_5$ .
Then we take $L^2(\mathbb{R}\times\mathbb{T})$ for $f_{N_j}~(j=1,2)$ and $L^{\infty}(\mathbb{R}\times\mathbb{T})\hookleftarrow H^{1+}(\mathbb{R}\times\mathbb{T})$ for $f_{N_j}~(j=3,4)$ to have that the contribution of this case to the left-hand side of \eqref{eq:Lambda_4-minus} is bounded by
\begin{equation*}
\begin{split}
& c\sum_{\scriptstyle N_1\sim N_2\gtrsim N+N_3+N_4} \left(\frac{m(N_1)^2}{\theta_0}+m(N_1)m(N_2)m(N_3)m(N_4)\right)\frac{1}{N_1N_2}\prod_{j=1}^4\|f_{N_j}\|_{H^1(\mathbb{R}\times\mathbb{T})}\\
  \lesssim  &  \frac{1}{N^{1-}}\|If\|_{H^1(\mathbb{R}\times\mathbb{T})}^4.
\end{split}
\end{equation*}
%
Hence we prove the desired estimate in \eqref{eq:Lambda_4-minus}.
\end{proof}

\section{Proof of Theorem \ref{thm:growth}}\label{sec:proofTheorem}

This section devotes to the proof of Theorem \ref{thm:growth}.
First we begin by summarizing the linear estimates adapted in the space $X^{s,b}$.
The proof of this lemma can be found in \cite{b1,kpv}.

\begin{lemma}\label{lem:linearX}
Let $I=[0,\delta]\subset \mathbb{R}$ be a bounded time interval.
For $s\in\mathbb{R}$ and $0<b'<b<1/2$, there exist constants $C>0$ and $c=c(b,b')>0$ such that
\begin{equation*}
\|e^{-it\Delta}\phi\|_{X^{s,1/2+}(I)}\le C \|\phi\|_{H^s(\mathbb{R}\times\mathbb{T})},
\end{equation*}
\begin{equation*}
\left\|\int_0^te^{-i(t-t')\Delta}F(\cdot,t')\,dt'\right\|_{X^{s,1/2+}}\le C \|F\|_{X^{s,-1/2+}(I)},
\end{equation*}
\begin{equation*}
\|F||_{X^{s,-b}(I)}\le C \delta^{c}\|F\|_{X^{s,-b'}(I)}.
\end{equation*}
\end{lemma}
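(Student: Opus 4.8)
The plan is to treat all three inequalities as standard $X^{s,b}$ machinery, following the scheme of \cite{b1,kpv}. Since the spatial weight $\langle\zeta\rangle^s$ is a Fourier multiplier in $\zeta$ alone and commutes with every operation below, I first reduce each estimate to the case $s=0$, after which the remaining content is a one-dimensional statement in the modulation variable $\sigma=\tau-|\zeta|^2$. Throughout I fix a smooth cutoff $\psi\in C_c^\infty(\mathbb{R})$ with $\psi\equiv 1$ on $I=[0,\delta]$ and write $\psi_\delta(t)=\psi(t/\delta)$; the restricted norms are then handled by choosing near-extremal extensions and multiplying by such cutoffs.

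For the homogeneous estimate, I note that the space-time Fourier transform of $\psi(t)e^{-it\Delta}\phi$ factors as $\widehat\psi(\tau-|\zeta|^2)\widehat\phi(\zeta)$ (up to a harmless constant), because the free evolution is supported on the characteristic surface $\tau=|\zeta|^2$. Substituting into the $X^{0,1/2+}$ norm and changing variables $\sigma=\tau-|\zeta|^2$ separates the $\tau$-integration, yielding the product $\|\langle\sigma\rangle^{1/2+}\widehat\psi(\sigma)\|_{L^2(d\sigma)}\,\|\phi\|_{L^2}$; the first factor is finite since $\psi$ is Schwartz, which gives the claimed bound.

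The time-localization estimate is the contrapositive form of the standard cutoff lemma: for $-1/2<\beta<\beta'<1/2$ one has $\|\psi_\delta G\|_{X^{s,\beta}}\lesssim \delta^{\beta'-\beta}\|G\|_{X^{s,\beta'}}$, and taking $\beta=-b$, $\beta'=-b'$ produces the gain $\delta^{c}$ with $c=b-b'>0$. After reducing to $s=0$ and freezing $\zeta$, this becomes the one-dimensional convolution inequality $\|\langle\cdot\rangle^{\beta}(\widehat{\psi_\delta}*h)\|_{L^2}\lesssim \delta^{\beta'-\beta}\|\langle\cdot\rangle^{\beta'}h\|_{L^2}$, which follows from $\widehat{\psi_\delta}(\tau)=\delta\widehat\psi(\delta\tau)$, the elementary weight inequality $\langle\tau\rangle^{\beta}\lesssim\langle\tau-\sigma\rangle^{|\beta|}\langle\sigma\rangle^{\beta}$, and Young's inequality; the constraint $|\beta|,|\beta'|<1/2$ is exactly what keeps the relevant $\langle\cdot\rangle$-weighted kernels in $L^2$. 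Passing from $G$ to the restricted norm via the infimum over extensions completes this part.

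The main obstacle is the inhomogeneous (Duhamel) estimate, since dividing by the modulation $\tau-|\zeta|^2$ is illegitimate on the resonant set $|\tau-|\zeta|^2|\lesssim 1$. I would split $w(t)=\psi(t)\int_0^t e^{-i(t-t')\Delta}F(t')\,dt'$ according to whether $|\sigma|=|\tau-|\zeta|^2|$ is $\lesssim 1$ or $\gtrsim 1$. On the non-resonant region the Duhamel kernel genuinely gains one factor of $\langle\sigma\rangle^{-1}$, turning the $X^{s,-1/2+}$ input into an $X^{s,1/2+}$ output after using the decay of $\widehat\psi$. On the resonant region one expands the oscillatory factor $e^{it\sigma}-1$ in its Taylor series in $t\sigma$, so that the apparent singularity $1/\sigma$ cancels and each resulting term is again controlled by $\|F\|_{X^{s,-1/2+}}$ using the rapid decay of the derivatives of $\widehat\psi$; the series converges because $|\sigma|\lesssim 1$ there. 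This is precisely the computation carried out in \cite{kpv}, and I would invoke it, indicating the modulation splitting and the Taylor-expansion step rather than reproducing the full estimate.
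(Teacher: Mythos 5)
Your treatments of the first two inequalities are sound: the factorization $\widehat{\psi(t)e^{-it\Delta}\phi}(\tau,\zeta)=\widehat\psi(\tau-|\zeta|^2)\,\widehat\phi(\zeta)$ gives the homogeneous bound exactly as you describe, and for the Duhamel term you correctly identify the resonant/non-resonant modulation splitting and the Taylor expansion of $e^{it\sigma}-1$, then defer to \cite{kpv} --- which is no less than the paper itself does, since its entire ``proof'' of Lemma \ref{lem:linearX} is the citation of \cite{b1,kpv}. The genuine gap is in your third estimate. The three ingredients you invoke --- the scaling $\widehat{\psi_\delta}(\tau)=\delta\widehat\psi(\delta\tau)$, Peetre's inequality $\langle\tau\rangle^{\beta}\lesssim\langle\tau-\sigma\rangle^{|\beta|}\langle\sigma\rangle^{\beta}$, and Young's inequality --- do not yield the gain $\delta^{\beta'-\beta}$. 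Running your chain literally gives
\begin{equation*}
\bigl\|\langle\cdot\rangle^{\beta}\bigl(\widehat{\psi_\delta}*h\bigr)\bigr\|_{L^2}
\lesssim\bigl\|\langle\cdot\rangle^{|\beta|}\widehat{\psi_\delta}\bigr\|_{L^1}\,
\bigl\|\langle\cdot\rangle^{\beta'}h\bigr\|_{L^2},
\qquad
\bigl\|\langle\cdot\rangle^{|\beta|}\widehat{\psi_\delta}\bigr\|_{L^1}
=\int\Bigl\langle\tfrac{\sigma}{\delta}\Bigr\rangle^{|\beta|}\,|\widehat\psi(\sigma)|\,d\sigma
\sim\delta^{-b}
\end{equation*}
for $0<\delta\le 1$ (recall $\beta=-b$, so $|\beta|=b$). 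Thus your argument produces the constant $\delta^{-b}$, which \emph{diverges} as $\delta\to 0$ instead of vanishing like $\delta^{b-b'}$. This is not a repairable bookkeeping slip: $\|\widehat{\psi_\delta}\|_{L^1}$ is scale-invariant and any $\langle\cdot\rangle$-weight only increases it, so no redistribution of exponents inside Young's inequality can ever extract a positive power of $\delta$. The sign of the exponent is essential for the paper, because in Theorem \ref{thm:LWP} the factor $\delta_0^{c}$ with $c>0$ is precisely what closes the contraction in \eqref{eq:X_Duhamel}.

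The smallness in $\delta$ comes from a mechanism invisible to pure Fourier-side convolution bounds, and it is exactly where the hypothesis $b,b'<1/2$ enters. A correct route: since multiplication by the real function $\psi_\delta$ is self-adjoint with respect to the $L^2$ pairing, the desired bound $X^{s,-b'}\to X^{s,-b}$ with constant $\delta^{b-b'}$ is dual to $\|\psi_\delta u\|_{X^{-s,b'}}\lesssim\delta^{b-b'}\|u\|_{X^{-s,b}}$ with \emph{positive} indices $0<b'<b<1/2$. After your (legitimate) reduction to one dimension in the time variable, this follows by interpolating between two endpoints: at $b'=b$ one uses the uniform boundedness of multiplication by $\psi_\delta$ on $H^{b}_t$ for $|b|<1/2$ (a scaling-invariant statement for the homogeneous norm), and at $b'=0$ the gain appears from H\"older in \emph{physical} time combined with the Sobolev embedding $H^{b}_t\hookrightarrow L^{2/(1-2b)}_t$, namely
\begin{equation*}
\|\psi_\delta f\|_{L^2_t}\le\|\psi_\delta\|_{L^{1/b}_t}\,\|f\|_{L^{2/(1-2b)}_t}\lesssim\delta^{b}\,\|f\|_{H^{b}_t}.
\end{equation*}
(Equivalently, one can split the temporal frequencies of $h$ at $|\sigma|\sim 1/\delta$ and treat the two pieces by different arguments.) Either way, the factor $\delta^{b-b'}$ is H\"older-in-time smallness of the cutoff, not decay of $\widehat{\psi_\delta}$; your sketch should be amended accordingly.
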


The following local well-posedness result holds in the space $IH^s$.

\begin{theorem}[Modified local well-posedness]\label{thm:LWP}
Let $s>1$, and let $u_0\in H^s(\mathbb{R}\times\mathbb{T})$.
Then the Cauchy problem \eqref{eq:NLS} is locally well-posed in $H^s(\mathbb{R}\times\mathbb{T})$.
Moreover, the solution exists on the time interval $[0,\delta_0]$ with the lifetime
$$
\delta_0\sim \|Iu_0\|_{H^1(\mathbb{R}\times\mathbb{T})}^{-c},
$$
and the solution $u(t)$ satisfies the estimate
$$
\|Iu\|_{X^{1,1/2+}([0,\delta_0])}\lesssim \|Iu_0\|_{H^1(\mathbb{R}\times\mathbb{T})}.
$$
\end{theorem}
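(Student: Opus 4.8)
The plan is to run a standard contraction-mapping argument in the $I$-modified $X^{s,b}$ framework, working entirely at the $H^1$ level after applying the multiplier $I$. Since $I$ is a spatial Fourier multiplier it commutes with $e^{it\Delta}$, so applying $I$ to the Duhamel formula gives $Iu(t) = e^{it\Delta}Iu_0 - i\int_0^t e^{i(t-t')\Delta} I(|u|^2u)(t')\,dt'$. First I would set up the solution map sending the unknown $w = Iu$ to $e^{it\Delta}Iu_0 - i\int_0^t e^{i(t-t')\Delta} I(|u|^2u)(t')\,dt'$ on a ball of radius $R = 2C\|Iu_0\|_{H^1}$ in $X^{1,1/2+}([0,\delta_0])$. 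By the linear and inhomogeneous estimates of Lemma \ref{lem:linearX}, controlling this map reduces to the trilinear estimate $\|I(|u|^2u)\|_{X^{1,-1/2+}([0,\delta_0])} \lesssim \delta_0^{c}\,\|Iu\|_{X^{1,1/2+}([0,\delta_0])}^3$, where the small factor $\delta_0^c$ is extracted from the time-localization estimate (the third inequality in Lemma \ref{lem:linearX}) by trading the index $-1/2+$ against a slightly less negative one.

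The heart of the matter is the trilinear bound, and it splits into two independent pieces. The \emph{first} is to dispose of the multiplier $I$: writing $|u|^2u = u\bar u u$, the output frequency $\zeta$ satisfies $|\zeta| \lesssim \max_{1\le j\le 3}|\zeta_j|$ on the Fourier support, and since $x \mapsto m(x)\langle x\rangle$ is monotone increasing we have $m(\zeta)\langle\zeta\rangle \lesssim m(\zeta_{j_0})\langle\zeta_{j_0}\rangle$, where $j_0$ indexes the factor carrying the largest frequency. This pointwise comparison of multipliers lets me move the full $I\langle\nabla\rangle$ weight onto that single factor, reducing matters to the $I$-free estimate $\|u_1\bar u_2 u_3\|_{X^{1,-1/2+}} \lesssim \prod_{j=1}^3 \|u_j\|_{X^{1,1/2+}}$. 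The \emph{second} piece is this $I$-free estimate, which I would prove by duality: pairing against $v\in X^{-1,1/2-}$ and applying H\"older in the four-fold product $L^4\times L^4\times L^4\times L^4$, with the derivative placed on the highest-frequency input so that it absorbs the $X^1$ weight while the remaining factors are measured in $X^{0+,1/2+}$. Each $L^4$ factor is then controlled by the Strichartz bounds \eqref{eq:4}--\eqref{eq:4--} of the previous section, and the spare $0+$ regularity is exactly what lets me sum the Littlewood--Paley pieces and close the off-diagonal frequency interactions.

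Finally I would assemble the fixed point. With the trilinear estimate in hand, the map sends the ball of radius $R$ into itself and is a contraction provided $\delta_0^{c} R^2 \lesssim 1$, that is, $\delta_0 \sim \|Iu_0\|_{H^1}^{-c'}$ for a suitable exponent $c' > 0$; a Banach fixed point then produces the solution, and unwinding $u$ from $w = Iu$ together with \eqref{eq:HIH} upgrades the conclusion to local well-posedness in $H^s$ and yields the stated a priori bound $\|Iu\|_{X^{1,1/2+}([0,\delta_0])} \lesssim \|Iu_0\|_{H^1}$. I expect the main obstacle to be the trilinear estimate, and within it the dyadic summation over frequencies after the multiplier has been redistributed: one must verify that the monotonicity bound on $m$ together with the $0+$ Strichartz slack genuinely controls every frequency configuration — in particular the high-high-to-low interactions where two large factors nearly cancel — rather than merely the generic case.
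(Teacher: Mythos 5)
Your proposal is correct and takes essentially the same route as the paper's proof: a contraction argument for $v=Iu$ in $X^{1,1/2+}([0,\delta_0])$, reduction by duality to a trilinear estimate in which the multiplier is controlled through the monotonicity of $\zeta\mapsto m(\zeta)\langle\zeta\rangle$ (the paper's inequality $\langle\zeta_4\rangle m(\zeta_4)\lesssim\max_{1\le j\le 3}\langle\zeta_j\rangle m(\zeta_j)$ is exactly your reduction), followed by Littlewood--Paley decomposition and four-fold H\"older using the $L^{4\pm}$ Strichartz bounds \eqref{eq:4}--\eqref{eq:4--}, with the $\delta_0^c$ gain extracted from the time-localization inequality of Lemma \ref{lem:linearX}. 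The only cosmetic difference is that you strip the $I$-weight onto the highest-frequency factor before invoking the $I$-free $H^1$-level trilinear estimate, whereas the paper carries all weights inside a single dual quadrilinear form and runs an explicit two-case dyadic analysis (including the high-high-to-low configuration you flagged, which indeed closes because $m\ge 1$ leaves ample slack); the underlying mechanism is identical.
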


\begin{proof}
The problem of existence of solutions is based on the fixed point argument in $IX^{s,1/2+}([0,\delta_0])$ for some $\delta_0>0$.
Solving the Cauchy problem \eqref{eq:NLS}, we consider the following Duhamel formulation of the problem as
$$
Iu(t)=e^{it\Delta}Iu_0-i\int_0^te^{i(t-t')\Delta}[I(|u|^2u)](t')\,dt'.
$$
Moreover, letting $v=Iu$ and denoting
$$
T[v](t)=e^{it\Delta}Iu_0-i\int_0^te^{i(t-t')\Delta}[I(|I^{-1}v|^2I^{-1}v)](t')\,dt',
$$
we are looking for a fixed point $v(t)$ in the space $X^{1,1/2+}([0,\delta_0])$ for the mapping $T$ such that $Tv=v$.

By combining Lemma \ref{lem:linearX} with \eqref{eq:HIH}, we obtain the following estimates
\begin{equation}\label{eq:X_linear}
\|e^{it\Delta}Iu_0\|_{X^{1,1/2+}([0,\delta_0])}\lesssim \|Iu_0\|_{H^1(\mathbb{R}\times\mathbb{T})}\lesssim \|u_0\|_{H^s(\mathbb{R}\times\mathbb{T})}
\end{equation}
and
\begin{equation}\label{eq:Duhamel}
\begin{split}
\left\|\int_0^te^{i(t-t')\Delta}[I(|I^{-1}v|^2I^{-1}v)](t')\,dt'\right\|_{X^{1,1/2+\varepsilon}([0,\delta_0])} & \lesssim \delta_0^c\|I(|I^{-1}v|^2I^{-1}v)\|_{X^{1,-1/2+3\varepsilon/2}([0,\delta_0])}.
\end{split}
\end{equation}
For the second term, we wish to show
$$
\|I(|I^{-1}v|^2I^{-1}v)\|_{X^{1,-1/2+3\varepsilon/2}([0,\delta_0])}\lesssim \|v\|_{X^{1,1/2+\varepsilon}([0,\delta_0])}^3.
$$
If we replace $v(t)$ by $\eta(t)v(t)$ where $\eta$ is a smooth bump function with compact support, then above estimate can ce reduced to the following
\begin{equation*}
\|I(I^{-1}v_1\overline{I^{-1}v_2}I^{-1}v_3)\|_{X^{1,-1/2+3\varepsilon/2}}\lesssim \prod_{j=1}^3\|v_j\|_{X^{1,1/2+\varepsilon}}.
\end{equation*}
By duality argument, we recast this estimate as
\begin{equation}\label{eq:duality}
\begin{split}
& \int_{\tau_1+\tau_2+\tau_3+\tau_4=0}\int_* \frac{\langle\zeta_4\rangle m(\zeta_4)\langle \tau_4+|\zeta_4|^2\rangle^{3\varepsilon/2}}{\langle \zeta_1\rangle m(\zeta_1)\langle \zeta_2\rangle m(\zeta_2)\langle \zeta_3\rangle m(\zeta_3)}\widehat{f_1}(\tau_1,\zeta_1)\widehat{\overline{f_2}}(\tau_2,\zeta_2)\widehat{f_3}(\tau_3,\zeta_2)\widehat{\overline{f_4}}(\tau_4,\zeta_4)\\
\lesssim & \prod_{j=1}^4\|f_j\|_{X^{0,1/2+\varepsilon}},
\end{split}
\end{equation}
for positive functions $\widehat{f_1},~\widehat{\overline{f_2}},~\widehat{f_3},~\widehat{\overline{f_4}}$.
The identities
$$
\zeta_1+\zeta_2+\zeta_3+\zeta_4=0
$$
and
$$
\tau_4+|\zeta_4|^2=-(\tau_1-|\zeta_1|^2)-(\tau_2+|\zeta_2|^2)-(\tau_4-|\zeta_4|^2)-2\zeta_{12}\cdot\zeta_{14}
$$
imply
$$
|\tau_4+|\zeta_4|^2|\lesssim \max\left\{|\tau_1-|\zeta_1|^2|,|\tau_2+|\zeta_2|^2|,|\tau_3-|\zeta_3|^2|,\max_{1\le j\le 3}|\zeta_j|^2\right\}
$$
and 
$$
\langle\zeta_4\rangle m(\zeta_4)\lesssim \max\{\langle\zeta_j\rangle m(\zeta_j)\mid 1\le j\le 3\}.
$$
By a Littlewood-Paley decomposition, we restrict each $f_{j}$ to a dyadic frequency band $|\zeta_j|\sim N_j$ by writing $f_{j,N_j}$ and will sum in the $N_j$ at the end of the argument, as used before.
We split the case into two sub-cases; $N_4 \sim \max\{N_j\mid 1\le j\le 3\}$, $N_4\ll \max\{N_j\mid 1\le j\le 3\}$, and develop the proof with the case by case analysis.

In the case when $N_4 \sim \max\{N_j\mid 1\le j\le 3\}$, we may assume $N_4\sim N_1=\max\{N_j\mid 1\le j\le 3\}$ by symmetry.
Apply the $L^{4-}(\mathbb{R}\times\mathbb{R}\times\mathbb{T})$ Strichartz estimate \eqref{eq:4--} to $f_{N_j}$ of $j=1,4$ and the $L^{4+}(\mathbb{R}\times\mathbb{R}\times\mathbb{T})$ Strichartz estimate \eqref{eq:4+} to $f_{N_j}$ of $j=2,3$.
Then the contribution of this case to the left-hand side of \eqref{eq:duality} is bounded by
\begin{equation*}
\begin{split}
& c\sum_{N_1\sim N_4> N_2+N_3}\|f_{1,N_1}\|_{X^{0,1/2-}}\|f_{2,N_2}\|_{X^{-1+,1/2+}}\|f_{3,N_3}\|_{X^{-1+,1/2+}}\|f_{4,N_4}\|_{X^{0,1/2+3\varepsilon/2-}}\\
\lesssim & \prod_{j=1}^4\|f_j\|_{X^{0,1/2+\varepsilon}}.
\end{split}
\end{equation*}

Next we deal with the case when $N_4\ll \max\{N_j\mid 1\le j\le 3\}$.
By symmetry, it is enough to assume $N_1\sim N_2\gtrsim \max\{N_j\mid 1\le j\le 3\}\gg N_4$.
Same as above, we get that the contribution of this case to the left-hand side of \eqref{eq:duality} is bounded by
\begin{equation*}
\begin{split}
& c\sum_{\scriptstyle N_1\sim N_2\gg N_4 \atop{\scriptstyle N_1\gtrsim N_3}}\|f_{1,N_1}\|_{X^{-1/2+,1/2-}}\|f_{2,N_2}\|_{X^{-1/2+,1/2+}}\|f_{3,N_3}\|_{X^{-1+,1/2+}}\|f_{4,N_4}\|_{X^{0,1/2+3\varepsilon/2-}}\\
\lesssim & \prod_{j=1}^4\|f_j\|_{X^{0,1/2+\varepsilon}}.
\end{split}
\end{equation*}
Putting all these estimates together, therefore we obtain \eqref{eq:duality}; thus by \eqref{eq:Duhamel}
\begin{equation}\label{eq:X_Duhamel}
\left\|\int_0^te^{i(t-t')\Delta}[I(|I^{-1}v|^2I^{-1}v)](t')\,dt'\right\|_{X^{1,1/2+}([0,\delta_0])} \lesssim \delta_0^c \|v\|_{X^{1,1/2+}([0,\delta_0])}^3.
\end{equation}

Let now consider the closed ball $B$ centered at the origin
$$
B=\left\{v\in X^{1,1/2+}([0,\delta_0])\mid \|v\|_{X^{1,1/2+}([0,\delta_0])}\le C\|Iu_0\|_{H^1(\mathbb{R}\times\mathbb{T})}\right\}.
$$
By \eqref{eq:X_linear} and \eqref{eq:X_Duhamel}, we have
$$
\|T[v]\|_{X^{1,1/2+}([0,\delta_0])}\le c\|Iu_0\|_{H^1(\mathbb{R}\times\mathbb{T})}+c\delta_0^c\|v\|_{X^{1,1/2+}([0,\delta_0])}^3\le C\|u_0\|_{H^s}
$$
for selecting $\delta_0>0$ appropriately smaller such that $\delta_0^c\|u_0\|_{H^s}^2\ll 1$, that is $T[v]\in B$ for $v\in B$.
Moreover, a similar argument applies to $T[v_1]-T[v_2]$, thus proving
$$
\|T[v_1]-T[v_2]\|_{X^{1,1/2+}([0,\delta_0])}\le \frac{1}{2}\|v_1-v_2\|_{X^{1,1/2+}([0,\delta_0])}
$$
for $v_1,v_2\in B$.
Then we have shown that the map $T$ is a contraction on $B$.
Therefore by Banach's fixed point theorem, we conclude that there exists a unique solution $u(t)=I^{-1}v(t)$ of \eqref{eq:NLS}, which is our local well-posedness result in the theorem.
\end{proof}

\begin{remark}
In Theorem \ref{thm:LWP}, the time $\delta_0$ of existence of unique solution to \eqref{eq:NLS} depends only on $H^1$ norm of data.
Hence the global well-posedness result holds in $H^s(\mathbb{R}\times\mathbb{T})$ for $s\ge 1$, which can be found in the paper \cite{dy}. 
\end{remark}

We provide here the estimate on $m(\zeta)$ with the angularly restriction.

\begin{lemma}\label{lem:m1}
Let $\zeta_j\in\mathbb{R}\times\mathbb{Z}~(1\le j\le 4)$ such that $\zeta_1+\zeta_2+\zeta_3+\zeta_4=0$.
Assume $|\zeta_1|\sim |\zeta_2|\gtrsim |\zeta_3|\gtrsim  |\zeta_4| + 1$.
Then
\begin{equation*}
\begin{split}
& \left|m(\zeta_1)^2|\zeta_1|^2-m(\zeta_2)^2|\zeta_2|^2+m(\zeta_3)^2|\zeta_3|^2-m(\zeta_4)^2|\zeta_4|^2\right|\\
\lesssim & m(\zeta_1)^2|\zeta_{12}|  \left(|\zeta_1||\cos\angle(\zeta_{12},\zeta_{14})|+|\zeta_3|\right).
\end{split}
\end{equation*}
\end{lemma}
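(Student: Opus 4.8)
The plan is to reduce the alternating sum to a single one–dimensional integral taken along the \emph{short} difference vector $\zeta_{12}$, and to read off the $\cos\angle(\zeta_{12},\zeta_{14})$ gain from a pointwise bound on $\zeta_{12}\cdot\zeta_1$. Throughout write $\Phi(\zeta)=m(\zeta)^2|\zeta|^2$; since $m$ is radial, $\Phi(\zeta)=h(|\zeta|)$ with $h(\rho)=m(\rho)^2\rho^2$ and $\nabla\Phi(\zeta)=h'(|\zeta|)\zeta/|\zeta|$, and the standard multiplier property $|m'(\rho)|\lesssim m(\rho)/\rho$ yields both $h'(\rho)\lesssim m(\rho)^2\rho$ and the monotonicity of $\rho\mapsto m(\rho)^2\rho$. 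I would first dispose of a degenerate case: because $\zeta_{12}=-(\zeta_3+\zeta_4)$ one always has $|\zeta_{12}|\le|\zeta_3|+|\zeta_4|\lesssim|\zeta_3|\le|\zeta_1|$, so if $|\zeta_{12}|\gtrsim|\zeta_1|$ then $|\zeta_3|\sim|\zeta_1|$, the right–hand side of the assertion is $\gtrsim m(\zeta_1)^2|\zeta_1|^2$, and the left–hand side is $\lesssim m(\zeta_1)^2|\zeta_1|^2$ by the monotonicity already exploited in Lemma~\ref{lem:m}. Hence I may assume $|\zeta_{12}|\ll|\zeta_1|$.

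The decisive ingredient is the pointwise estimate
\begin{equation*}
|\zeta_{12}\cdot\zeta_1|\lesssim|\zeta_{12}|\left(|\zeta_1||\cos\angle(\zeta_{12},\zeta_{14})|+|\zeta_3|\right).
\end{equation*}
I would prove it simply by writing $\zeta_1=\zeta_{14}-\zeta_4$, so that $\zeta_{12}\cdot\zeta_1=\zeta_{12}\cdot\zeta_{14}-\zeta_{12}\cdot\zeta_4$; the first term has absolute value $|\zeta_{12}||\zeta_{14}||\cos\angle(\zeta_{12},\zeta_{14})|$ with $|\zeta_{14}|\le|\zeta_1|+|\zeta_4|\lesssim|\zeta_1|$, while $|\zeta_{12}\cdot\zeta_4|\le|\zeta_{12}||\zeta_4|\le|\zeta_{12}||\zeta_3|$. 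This is where the angular factor is created, and it is the only genuinely new point of the lemma.

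Next, using $\zeta_2=-(\zeta_1-\zeta_{12})$, $\zeta_4=-(\zeta_3+\zeta_{12})$ (the latter because $\zeta_{34}=-\zeta_{12}$) together with the radiality of $\Phi$, I would represent the alternating sum through the fundamental theorem of calculus as
\begin{equation*}
\Phi(\zeta_1)-\Phi(\zeta_2)+\Phi(\zeta_3)-\Phi(\zeta_4)=\int_0^1\left[(\zeta_{12}\cdot\nabla\Phi)(\zeta_1-t\zeta_{12})-(\zeta_{12}\cdot\nabla\Phi)(\zeta_3+t\zeta_{12})\right]dt.
\end{equation*}
The crucial feature of this expansion, in contrast to a second–order double integral in the two directions $\zeta_{12}$ and $\zeta_{14}$, is that it moves only along the \emph{short} vector $\zeta_{12}$: the first argument $\zeta_1-t\zeta_{12}$ then has radius $\sim|\zeta_1|$ for every $t\in[0,1]$ (here $|\zeta_{12}|\ll|\zeta_1|$ is used), while the second argument $\zeta_3+t\zeta_{12}$ has radius $\le|\zeta_3|$. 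Keeping the large radius away from degeneration is precisely what prevents the logarithmic loss that the second–order expansion would incur.

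Finally I would estimate the two pieces of the integrand. For the first, $(\zeta_{12}\cdot\nabla\Phi)(\zeta_1-t\zeta_{12})=h'(\rho_a)\,\zeta_{12}\cdot(\zeta_1-t\zeta_{12})/\rho_a$ with $\rho_a=|\zeta_1-t\zeta_{12}|\sim|\zeta_1|$; combining $h'(\rho_a)\lesssim m(\zeta_1)^2|\zeta_1|$ with the core estimate and $|\zeta_{12}|^2\lesssim|\zeta_{12}||\zeta_3|$ (to handle the $-t|\zeta_{12}|^2$ correction) gives the bound $m(\zeta_1)^2|\zeta_{12}|(|\zeta_1||\cos\angle(\zeta_{12},\zeta_{14})|+|\zeta_3|)$. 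For the second, $\rho_b=|\zeta_3+t\zeta_{12}|\le|\zeta_3|$, so by the monotonicity of $m(\rho)^2\rho$ one has $h'(\rho_b)\lesssim m(\zeta_3)^2|\zeta_3|\le m(\zeta_1)^2|\zeta_3|$, while Cauchy–Schwarz gives $|\zeta_{12}\cdot(\zeta_3+t\zeta_{12})|/\rho_b\le|\zeta_{12}|$, yielding $m(\zeta_1)^2|\zeta_3||\zeta_{12}|$. Integrating in $t$ and adding the two contributions produces the stated inequality. The main obstacle is conceptual rather than computational: choosing to expand along $\zeta_{12}$ and recognizing the rewriting $\zeta_1=\zeta_{14}-\zeta_4$ that converts $\zeta_{12}\cdot\zeta_1$ into the cosine plus an $O(|\zeta_{12}||\zeta_3|)$ error; once these are in place, each factor in the integrand is controlled at its natural scale and no logarithm appears.
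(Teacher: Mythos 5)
Your proof is correct and follows essentially the same route as the paper's: both are first-order (mean-value) arguments along the short vector $\zeta_{12}=-\zeta_{34}$, with the angular gain extracted from the resonance quantity $\zeta_{12}\cdot\zeta_{14}$ (your identity $\zeta_{12}\cdot\zeta_1=\zeta_{12}\cdot\zeta_{14}-\zeta_{12}\cdot\zeta_4$ is the paper's identity $|\zeta_1|^2-|\zeta_2|^2+|\zeta_3|^2-|\zeta_4|^2=2|\zeta_{12}||\zeta_{14}|\cos\angle(\zeta_{12},\zeta_{14})$ in disguise) and the error term $m(\zeta_1)^2|\zeta_{12}||\zeta_3|$ coming from the low pair. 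The only difference is packaging: you encode both pair differences in a single fundamental-theorem-of-calculus integral along $\zeta_{12}$, while the paper splits off $m(\zeta_3)^2|\zeta_3|^2-m(\zeta_4)^2|\zeta_4|^2$ separately and treats $m(\zeta_1)^2|\zeta_1|^2-m(\zeta_2)^2|\zeta_2|^2$ by an algebraic splitting combined with the monotonicity of $m(\zeta)^2|\zeta|^2$.
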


\begin{proof}
We have shown the similar estimate if $m(\zeta)$ is decreasing function in the paper \cite{ckstt1} dealing the case $s<1$.
In the following, we revisit that proof.

In particular, when $|\zeta_1|\sim |\zeta_3|$, the proof follows immediately.
Clearly, $|m(\zeta_3)^2|\zeta_3|^2-m(\zeta_4)^2|\zeta_4|^2|\lesssim m(\zeta_1)^2|\zeta_{34}||\zeta_3|$.
Then it suffices to show
$$
\left|m(\zeta_1)^2|\zeta_1|^2-m(\zeta_2)^2|\zeta_2|^2\right |\lesssim m(\zeta_1)^2|\zeta_1||\zeta_{12}||\cos\angle(\zeta_{12},\zeta_{14})|
$$
for $|\zeta_1|\sim |\zeta_2|\gg |\zeta_3|\gtrsim |\zeta_4|>1$.
By the identity
\begin{equation*}
\begin{split}
|\zeta_1|^2-|\zeta_2|^2+|\zeta_3|^2-|\zeta_4|^2= & 2(\zeta_1+\zeta_2)\cdot(\zeta_1+\zeta_4)\\
= & 2|\zeta_{12}||\zeta_{14}|\cos\angle(\zeta_{12},\zeta_{14}),
\end{split}
\end{equation*}
and the property that the function $m(\zeta)^2|\zeta|^2$ is increasing, we have
\begin{equation*}
\begin{split}
& \left|m(\zeta_1)^2|\zeta_1|^2-m(\zeta_2)^2|\zeta_2|^2\right|\\
\lesssim &  \left| m(\zeta_1)^2-m(\zeta_2)^2\right| |\zeta_1|^2 +m(\zeta_2)^2|\zeta_{12}||\zeta_{14}||\cos\angle(\zeta_{12},\zeta_{14})|+m(\zeta_1)^2|\zeta_3||\zeta_{12}|\\
 \lesssim & m(\zeta_1)^2|\zeta_1|\left||\zeta_1|-|\zeta_2|\right|+m(\zeta_1)^2|\zeta_{12}||\zeta_1||\cos\angle(\zeta_{12},\zeta_{14})|+m(\zeta_1)^2|\zeta_3||\zeta_{12}|\\
 \lesssim & m(\zeta_1)^2|\zeta_1||\zeta_{12}||\cos\angle(\zeta_{12},\zeta_{14})|+m(\zeta_1)^2|\zeta_3||\zeta_{12}|,
\end{split}
\end{equation*}
which is the desired inequality.
\end{proof}

We will now prove the following theorem, which guarantees the growth estimate $\|Iu(t)\|_{H^1(\mathbb{R}\times\mathbb{T})}$.

\begin{theorem}\label{thm:energy-i}
Let $u(t)$ be a solution to \eqref{eq:NLS}.
Then we have
$$
E_I[u](\delta)\le E_I[u](0)+cN^{-2+}
$$
where the constant $c>0$ depends only on $\|Iu\|_{X^{1,1/2+}([0,\delta])}$.
\end{theorem}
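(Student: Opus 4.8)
The plan is to differentiate the modified energy $E_I[u]$ in time, use the normal-form construction of $\Lambda_4$ to cancel the dangerous quartic contribution, and then integrate what survives over $[0,\delta]$ by the fundamental theorem of calculus. Writing the equation as $\partial_t u = i\Delta u - i|u|^2u$, differentiation of the quadratic part $\tfrac12\|\nabla Iu\|_{L^2}^2$ in \eqref{eq:modified-e} produces, after the linear contributions drop out because $|\zeta_1|=|\zeta_2|$ on the diagonal $\zeta_1+\zeta_2=0$, a quartic expression whose multiplier is the numerator $m(\zeta_1)^2|\zeta_1|^2-m(\zeta_2)^2|\zeta_2|^2+m(\zeta_3)^2|\zeta_3|^2-m(\zeta_4)^2|\zeta_4|^2$. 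Differentiating the quartic correction in \eqref{eq:modified-e} through the linear flow multiplies $\Lambda_4$ by the resonance factor $|\zeta_1|^2-|\zeta_2|^2+|\zeta_3|^2-|\zeta_4|^2=2|\zeta_{12}||\zeta_{14}|\cos\angle(\zeta_{12},\zeta_{14})$, which by the very definition of $\Lambda_4$ reproduces that numerator wherever the indicator defining $\Lambda_4$ is active. Thus the two quartic contributions cancel except on the complementary near-resonant region $\{|\cos\angle(\zeta_{12},\zeta_{14})|\le\theta_0,\ \max_j|\zeta_j|\gtrsim N\}$, leaving
$$
\frac{d}{dt}E_I[u](t)=Q_{\mathrm{res}}(t)+S(t),
$$
where $Q_{\mathrm{res}}$ is the surviving quartic term supported in the near-resonant region and $S$ is a sextic term arising from inserting the nonlinearity $-i|u|^2u$ into the quartic correction. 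It then remains to bound $\int_0^\delta|Q_{\mathrm{res}}|\,dt$ and $\int_0^\delta|S|\,dt$ each by $cN^{-2+}$.

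The heart of the argument, and the step I expect to be the main obstacle, is the estimate for $Q_{\mathrm{res}}$, precisely because this is the region where the normal form is switched off and no cancellation is available. After a Littlewood--Paley decomposition I set $N_1=\max_j N_j$ and bound the numerator on the near-resonant region by Lemma \ref{lem:m} (or the sharper Lemma \ref{lem:m1}) by $m(\zeta_1)^2|\zeta_{12}||\zeta_{14}|$. I then pair the four factors $\widehat u(\zeta_1)\widehat{\overline u}(\zeta_2)\widehat u(\zeta_3)\widehat{\overline u}(\zeta_4)$ into two bilinear interactions adapted to the vectors $\zeta_{12},\zeta_{14}$, estimate the space-time integral by Cauchy--Schwarz in $L^2(\mathbb{R}\times\mathbb{R}\times\mathbb{T})$, and translate the constraint $|\cos\angle(\zeta_{12},\zeta_{14})|\le\theta_0$ into an angular separation of the paired input frequencies, following the geometric bookkeeping of \cite{ckstt1}. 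Applying the angularly refined bilinear Strichartz estimates of Proposition \ref{prop:bilinearX} (and Proposition \ref{prop:bilinear0} for the $u\overline u$ pairings) with angular parameter $\theta\sim\theta_0\sim 1/N_1$ and separation $M$, the gain $\langle\theta N_2\rangle^{1/2}/M^{1/2}\sim M^{-1/2}$ combines with the numerator bound, the $X^{1,1/2+}([0,\delta])$-control of $Iu$, and the $H^1$ normalisation $1/(N_1N_2)$ to yield, after summation over dyadic frequencies and over the $O(1/\theta_0)$ angular sectors, the bound $cN^{-2+}$. The delicate point is verifying that the $\xi$-separation hypothesis $|\xi_1-\xi_2|\gtrsim M$ of the bilinear estimates is genuinely available in the near-orthogonal regime and that $M$ can be taken large enough to close the power of $N$.

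For the sextic term $S$ the argument is more routine. Using the pointwise bound $|\Lambda_4|\lesssim m(N_1)^2/\theta_0$ that follows from Lemma \ref{lem:m}, I distribute the six factors (one of which is the inserted cubic $|u|^2u$) among the $L^{4+}$ and $L^{4-}$ Strichartz spaces of \eqref{eq:4+} and \eqref{eq:4--}, together with the Sobolev embedding into $L^\infty$, all controlled by $\|Iu\|_{X^{1,1/2+}([0,\delta])}$. The extra factor of $u$ relative to the quartic term, combined with the $m$- and $\theta_0$-gains, again produces $\int_0^\delta|S|\,dt\lesssim cN^{-2+}$. Adding the two contributions gives $E_I[u](\delta)-E_I[u](0)=\int_0^\delta\frac{d}{dt}E_I[u]\,dt\le cN^{-2+}$, with $c$ depending only on $\|Iu\|_{X^{1,1/2+}([0,\delta])}$, which is the assertion of the theorem.
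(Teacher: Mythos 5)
Your overall strategy coincides with the paper's: differentiate $E_I[u]$, use the normal-form cancellation to reduce matters to the near-resonant quartic term (the paper's $\widetilde{\Lambda}_4$, supported where $|\cos\angle(\zeta_{12},\zeta_{14})|\le\theta_0$ and $\max_j|\zeta_j|\gtrsim N$) plus a sextic term $\Lambda_6$, then estimate the quartic piece by the angular bilinear estimates of Section \ref{sec:bilinear} and the sextic piece by $L^4$-type Strichartz and Sobolev. However, there is a genuine gap at exactly the step you yourself flag as ``the delicate point,'' and it is where the paper does its real work. On $\mathbb{R}\times\mathbb{T}$ the bilinear gain comes \emph{only} from separation in the continuous variable $\xi$, so one must actually produce that separation inside the near-resonant region. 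The paper does this, in the critical regime $N_1\sim N_2\gtrsim\max\{N_3,N\}\gg N_4$, by first converting the constraint $|\cos\angle(\zeta_{12},\zeta_{14})|\le\theta_0$ into $|\cos\angle(\zeta_1,\zeta_3)|=O(N_4/N_3)$, which forces the dichotomy $|\xi_1|\gtrsim N_1$ or $|\xi_3|\gtrsim N_3$; it then splits into $|\xi_1+\xi_2|\gtrsim N_3$ and $|\xi_1+\xi_2|\ll N_3$ and uses \emph{different} pairings in the two cases: $u_{N_1}\overline{u_{N_2}}$ against $u_{N_3}\overline{u_{N_4}}$ (Proposition \ref{prop:bilinear0} with $M\sim N_3$, together with Remark \ref{rem:pro}) in the first, and $u_{N_1}u_{N_3}$ together with $u_{N_2}u_{N_4}$ (Propositions \ref{prop:bilinearX} and \ref{prop:1} with $M\sim N_1$) in the second — not a single pairing ``adapted to $\zeta_{12},\zeta_{14}$.'' Moreover, your parameter choice $\theta\sim\theta_0\sim 1/N_1$ with gain $\langle\theta N_2\rangle^{1/2}/M^{1/2}\sim M^{-1/2}$ is not actually available: since $\zeta_{14}$ differs from $\zeta_1$ (and $\zeta_{12}$ from $-\zeta_{34}$) by vectors of size $N_4$, the angular constraint transfers to the genuine input frequencies only with error $O(N_4/N_1)$, so the workable angular scale is $\theta=N_4/N_1$ and the gains are $\langle N_4\rangle^{1/2}/N_3^{1/2}$, $\langle N_1N_4/N_3\rangle^{1/2}/N_1^{1/2}$, etc., as in the paper. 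Your arithmetic closes only under the unavailable gain, so the quartic estimate as proposed does not go through without the paper's case analysis.

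There is a second, smaller gap in the sextic term. The crude bound $|\Lambda_6|\lesssim\max_j m(\zeta_j)^2/\theta_0\sim m(N_1)^2N_1$, which is all you invoke, is insufficient in the case where exactly two frequencies are $\gtrsim N$ and the remaining four are $\ll N$: the four low frequencies then contribute no gain, and this bound only yields $N^{-1+}$, not $N^{-2+}$. The paper closes this case with the refined multiplier bounds \eqref{eq:c1}--\eqref{eq:c3}, exploiting the structure of $\Lambda_6$ as a difference of $\Lambda_4$-blocks: blocks with all arguments $\ll N$ are bounded by $1$, while blocks with two large arguments are controlled via Lemma \ref{lem:m1} by $\max_j m(\zeta_j)^2\left(1+\min\{|\zeta_1|,|\zeta_3|\}+\min\{|\zeta_2|,|\zeta_4|\}\right)$, replacing the loss $N_1=1/\theta_0$ by $O(N_3)$, which is exactly what makes this case sum to $N^{-2+}$.
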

 
\begin{proof}
We will use by \eqref{eq:NLS}
\begin{equation}\label{eq:fNLS}
\widehat{u}_t(t,\zeta)=-i|\zeta|^2\widehat{u}(t,\zeta)-\frac{i}{(2\pi)^2}[\widehat{u}*\widehat{\overline{u}}*\widehat{u}](t,\zeta).
\end{equation}
Applying \eqref{eq:fNLS} to \eqref{eq:modified-e}, we have
\begin{equation*}
\begin{split}
& \frac{d}{dt}E_I[u](t) \\
= & -\frac{i}{2}\int|\zeta_1||\zeta_2|(|\zeta_1|^2-|\zeta_2|^2)m(\zeta_1)m(\zeta_2)\widehat{u}(t,\zeta_1)\widehat{\overline{u}}(t,\zeta_2)\\
& +\frac{i}{2(2\pi)^2}\int\left(|\zeta_1|^2m(\zeta_1)^2-|\zeta_2|^2m(\zeta_2)^2\right)\widehat{u}(t,\zeta_1)\widehat{\overline{u}}(t,\zeta_2)\widehat{u}(t,\zeta_3)\widehat{\overline{u}}(t,\zeta_4)\\
& -\frac{i}{4(2\pi)^2}\int \Lambda_4(\zeta_1,\zeta_2,\zeta_3,\zeta_4) (|\zeta_1|^2-|\zeta_2|^2+|\zeta_3|^2-|\zeta_4|^2)\widehat{u}(t,\zeta_1)\widehat{\overline{u}}(t,\zeta_2)\widehat{u}(t,\zeta_3)\widehat{\overline{u}}(t,\zeta_4)\\
& -\frac{i}{4(2\pi)^4}\int\Lambda_6(\zeta_1,\zeta_2,\zeta_3,\zeta_4,\zeta_5,\zeta_6)\widehat{u}(t,\zeta_1)\widehat{\overline{u}}(t,\zeta_2)\widehat{u}(t,\zeta_3)\widehat{\overline{u}}(t,\zeta_4)\widehat{u}(t,\zeta_5)\widehat{\overline{u}}(t,\zeta_6),
\end{split}
\end{equation*}
where
\begin{equation*}
\begin{split}
\Lambda_6(\zeta_1,\zeta_2,\zeta_3,\zeta_4,\zeta_5,\zeta_6)=& \Lambda_4(\zeta_1+\zeta_2+\zeta_3,\zeta_4,\zeta_5,\zeta_6)-\Lambda_4(\zeta_1,\zeta_2+\zeta_3+\zeta_4,\zeta_5,\zeta_6)\\
& +\Lambda_4(\zeta_1,\zeta_2,\zeta_3+\zeta_4+\zeta_5,\zeta_6)-\Lambda_4(\zeta_1,\zeta_2,\zeta_3,\zeta_4+\zeta_5+\zeta_6).
\end{split}
\end{equation*}
Upon defining
\begin{equation*}
\begin{split}
\widetilde{\Lambda}_4(\zeta_1,\zeta_2,\zeta_3,\zeta_4)=& \left(m(\zeta_1)^2|\zeta_1|^2-m(\zeta_2)^2|\zeta_2|^2+m(\zeta_3)^2|\zeta_3|^2-m(\zeta_4)^2|\zeta_4|^2\right)\\
& 1_{|\cos\angle(\zeta_{12},\zeta_{14})|\le \theta_0~\mathrm{and}~\max\{|\zeta_j|\mid 1\le j\le 4\}\gtrsim N}(\zeta_1,\zeta_2,\zeta_3,\zeta_4),
\end{split}
\end{equation*}
we obtain
\begin{equation}\label{eq:diff}
\begin{split}
& \frac{d}{dt}E_I[u](t) \\
= & -\frac{i}{4(2\pi)^2}\int \widetilde{\Lambda}_4(\zeta_1,\zeta_2,\zeta_3,\zeta_4)\widehat{u}(t,\zeta_1)\widehat{\overline{u}}(t,\zeta_2)\widehat{u}(t,\zeta_3)\widehat{\overline{u}}(t,\zeta_4)\\
& -\frac{i}{4(2\pi)^4}\int\Lambda_6(\zeta_1,\zeta_2,\zeta_3,\zeta_4,\zeta_5,\zeta_6)\widehat{u}(t,\zeta_1)\widehat{\overline{u}}(t,\zeta_2)\widehat{u}(t,\zeta_3)\widehat{\overline{u}}(t,\zeta_4)\widehat{u}(t,\zeta_5)\widehat{\overline{u}}(t,\zeta_6).
\end{split}
\end{equation}

For the sake of the argument, we can assume all functions $\widehat{u}(t,\zeta_j)~(j=1,3,5)$, $\widehat{\overline{u}}(t,\zeta_k)~(k=2,4,6)$ in \eqref{eq:diff} are positive.
Therefore, the proof of the claim reduces to show the following estimates
\begin{equation}\label{eq:e-4}
\left|\int_{\mathbb{R}}\int_* \widetilde{\Lambda}_4(\zeta_1,\zeta_2,\zeta_3,\zeta_4)\widehat{u}(t,\zeta_1)\widehat{\overline{u}}(t,\zeta_2)\widehat{u}(t,\zeta_3)\widehat{\overline{u}}(t,\zeta_4)\,dt\right|\lesssim \frac{1}{N^{2-}}\|Iu\|_{X^{1,1/2+}}^4
\end{equation} 
and
\begin{equation}\label{eq:e-6}
\begin{split}
& \left|\int_{\mathbb{R}}\int_*\Lambda_6(\zeta_1,\zeta_2,\zeta_3,\zeta_4,\zeta_5,\zeta_6)\widehat{u}(t,\zeta_1)\widehat{\overline{u}}(t,\zeta_2)\widehat{u}(t,\zeta_3)\widehat{\overline{u}}(t,\zeta_4)\widehat{u}(t,\zeta_5)\widehat{\overline{u}}(t,\zeta_6)\,dt\right|\\
\lesssim  & \frac{1}{N^{2-}}\|Iu\|_{X^{1,1/2+}}^6,
\end{split}
\end{equation}
where we prefer to state general setting, that is, replacing $\int_0^{\delta}$ by $\int_{\mathbb{R}}$.

As previously observed, by a Littlewood-Palay decomposition, we again use a decomposition in the space of frequencies arising from dyadic partitions of unity, that is $|\zeta_j|\sim N_j$.

\underline{Estimate on \eqref{eq:e-4}:}
We first show the estimate \eqref{eq:e-4}.
By symmetry, we may assume $N_1=\max\{N_j\mid 1\le j\le 4\}\gtrsim N$ and $N_2\ge N_4$. 
We split the case into two sub-cases; $N_1\sim N_3\gg  N_2\gtrsim N_4$, $N_1\sim N_2\gtrsim \max\{N_3,N_4\}$.

In the case when $N_1\sim N_3\gg  N_2\gtrsim N_4$, we have $\cos\angle (\zeta_{12},\zeta_{14})\sim 1+O(N_2/N_1)\sim  1$, which contracts with the constrain $|\cos\angle  (\zeta_{12},\zeta_{14})|\le \theta_0\ll 1$ in the support of $\widetilde{\Lambda}_4(\zeta_1,\zeta_2,\zeta_3,\zeta_4)$.

If $N_1\sim N_2\gtrsim \max\{N_3,N_4,N\}$, by symmetry and Lemma \ref{lem:m1}, we may assume $N_1\sim N_2\gtrsim N_3\ge N_4$ and $N_1 \theta_0\sim 1$, which shows
$$
|\widetilde{\Lambda}_4(\zeta_1,\zeta_2,\zeta_3,\zeta_4)|\lesssim m(N_1)^2N_3^2.
$$
Also if $N_4\gtrsim N$, we use the $L^4$-Strichartz estimate in \eqref{eq:4-} to have that the contribution of this case to the left-hand side of \eqref{eq:e-4} has the bound 
\begin{equation*}
\begin{split}
& c\sum_{N_1\sim N_2\ge N_3\ge N_4\gtrsim N} \frac{m(N_1)^2N_3^2}{N_1N_2N_3N_4}\|u_{N_1}\|_{X^{1,1/2+}}\|u_{N_2}\|_{X^{1,1/2+}}\|u_{N_3}\|_{X^{1,1/2+}}\|u_{N_4}\|_{X^{1,1/2+}}\\
\lesssim & \frac{1}{N^{2-}}\|Iu||_{X^{1,1/2+}}^4,
\end{split}
\end{equation*}
which is acceptable.
Meanwhile, in the case when $N_3\sim N_4$, we have that the contribution of this case to the left-hand side of \eqref{eq:e-4} has the bound 
\begin{equation*}
\begin{split}
& c\sum_{\scriptstyle N_1\sim N_2\gtrsim  N_3\sim N_4 \atop{\scriptstyle N_1\gtrsim N}}\frac{m(N_1)^2N_3^2}{N_1N_2N_3N_4}
\|u_{N_1}\|_{X^{1,1/2+}}\|u_{N_2}\|_{X^{1,1/2+}}\|u_{N_3}\|_{X^{1,1/2+}}\|u_{N_4}\|_{X^{1,1/2+}}\\
\lesssim &\frac{1}{N^{2-}}\|Iu\|_{X^{1,1/2+}}^4.
\end{split}
\end{equation*}
Then we only consider the case when $N_4\ll \min\{N_3,N\}$ in the following.

In the case when $N_3\gg N_4$, the conditions $N_1\sim N_2\gtrsim N$ and $N_3\gg N_4$ allow us to have
$$
|\cos\angle(\zeta_1,\zeta_3)|\le |\cos\angle (\zeta_{12},\zeta_{34})|+O\left(\frac{N_4}{N_3}\right)\le   \theta_0+O\left(\frac{N_4}{N_3}\right)=O\left(\frac{N_4}{N_3}\right).
$$
Using the notations $\zeta_j=(\xi_j,\eta_j)\in\mathbb{R}\times\mathbb{Z}$, we see
$$
o(1)=O\left(\frac{N_4}{N_3}\right)=\cos\angle (\zeta_1,\zeta_3)\sim \frac{\xi_1\xi_3+\eta_1\eta_3}{N_1N_3}
$$
so that, at least, either $|\xi_1|\gtrsim N_1$ or $|\xi_3|\gtrsim  N_3$ holds.
Also if $|\xi_1+\xi_2|\gtrsim N_3$, then $|\xi_3|\gtrsim N_3$.
If that is not the case, $|\xi_1+\xi_2|\ll N_3$ implies $|\xi_3|\ll N_3$, so that $|\xi_1|\sim |\xi_2|\sim N_1$.
We split this case into two sub-cases; $|\xi_1+\xi_2|\gtrsim N_3$, $|\xi_1+\xi_2|\ll N_3$.


Consider the case when $|\xi_1+\xi_2|\gtrsim N_3$.
Let $\theta=N_4/N_1$.
We use the angular decomposition $\arg(\zeta_1)=\ell_j\theta+O(\theta)$ to functions $u_{N_1}$ as
$$
u_{N_1}=\sum_{\ell_1}u_{\ell_1,N_1}.
$$
Also, we use the angular decomposition $\arg(\zeta_{34})=\ell_{34}\theta+O(\theta)$ to functions $u_{N_3}\overline{u_{N_4}}$ as
$$
u_{N_3}\overline{u_{N_4}}=\sum_{\ell_{34}}F_{\ell_{34}},
$$
where
\begin{equation*}
\begin{split}
F_{\ell_{34}}(t,z)= \int_{\mathbb{R}^2}\!\int  & e^{-it(\tau_3+\tau_4)-iz\cdot(\zeta_3+\zeta_4)}1_{|\xi_3|\sim N_3}1_{\arg(\zeta_{34})=\ell_{34}\theta+O(\theta)}\\
 & \widehat{u}_{N_3}(\tau_3,\zeta_4)\widehat{\overline{u}}_{N_4}(\tau_4,\zeta_4)\,(d\zeta_3)(d\zeta_4)d\tau_3d\tau_4.
\end{split}
\end{equation*}
Since
$$
\left|\cos\angle(\zeta_{12},\zeta_{1})-\cos\angle(\zeta_{12},\zeta_{14})\right|\lesssim \frac{N_4}{N_1}\sim\theta,
$$
we have
$$
|\cos\angle(\zeta_{34},\zeta_{1})|\lesssim \theta_0+\theta\lesssim \theta,
$$
Then $|\ell_1-\ell_{34}|=\pi/2\theta+O(1)$ or $|\ell_1-\ell_{34}|=3\pi/2\theta+O(1)$ holds, as we pointed out in the proof of Lemma \ref{lem:bilinear} previously.
By applying Proposition \ref{prop:bilinear0} to $u_{\ell_1,N_1}\overline{u_{N_2}}$, we have that the contribution of this case to the left-hand side of \eqref{eq:e-4} has the bound 
\begin{equation}\label{eq:ar}
\begin{split}
c\sum_{N_1\sim N_2\gtrsim N_3+N\gg N_4}\sum_{\scriptstyle  |\ell_1-\ell_{34}|=\pi/2\theta+O(1) \atop{\scriptstyle \mbox{or}~|\ell_1-\ell_{34}|=3\pi/2\theta+O(1)}}& \frac{m(N_1)^2N_3^2}{N_1N_2}\frac{\langle N_1\theta\rangle^{1/2}}{N_3^{1/2}}\\
& \|u_{\ell_1,N_1}\|_{X^{1,1/2+}} \|u_{N_2}\|_{X^{1,1/2+}}\|F_{\ell_{34}}\|_{L^2(\mathbb{R}\times\mathbb{R}\times\mathbb{T})}.
\end{split}
\end{equation}
Summing over $\ell_1,~\ell_{34}$ and using Remark \ref{rem:pro} for $F_{\ell_{34}}$, we arrive at the bound of the left-hand side of \eqref{eq:ar} by
\begin{equation*}
\begin{split}
&  c\sum_{N_1\sim N_2\gtrsim N_3+N\gg N_4}\frac{N_3^2}{N_1N_2N_3N_4}\frac{N_4^{1/2}}{N_3^{1/2}}\frac{N_4^{1/2}}{N_3^{1/2}}\\
& \qquad \|Iu_{N_1}\|_{X^{1,1/2+}} \|Iu_{N_2}\|_{X^{1,1/2+}}\|u_{N_3}\|_{X^{1,1/2+}}\|u_{N_4}\|_{X^{1,1/2+}}\\
\lesssim & \frac{1}{N^{2-}}\|Iu\|_{X^{1,1/2+}}^4.
\end{split}
\end{equation*}

Next consider the case when $|\xi_1+\xi_2|\ll N_3$ with nice properties $|\xi_1|\sim |\xi_2|\sim N_1$ and $|\cos\angle(\zeta_1,\zeta_3)|\lesssim N_4/N_3$.
Using Propositions \ref{prop:bilinearX} and \ref{prop:1} for $u_{N_1}u_{N_3}$ and $u_{N_2}u_{N_4}$, respectively, we have that the contribution of this case to the left-hand side of \eqref{eq:e-4} has the bound 
\begin{equation*}
\begin{split}
& c\sum_{N_1\sim N_2\gtrsim N_3+N\gg N_4}\frac{m(N_1)^2N_3^2}{N_1N_2N_3N_4}\frac{\langle N_1N_4/N_3\rangle^{1/2}}{N_1^{1/2}}\frac{N_4^{1/2}}{N_1^{1/2}}\\
& \|u_{N_1}\|_{X^{1,1/2+}} \|u_{N_2}\|_{X^{1,1/2+}}\|u_{N_3}\|_{X^{1,1/2+}}\|u_{N_4}\|_{X^{1,1/2+}}\\
\lesssim & \frac{1}{N^{2-}}\|Iu\|_{X^{1,1/2+}}^4,
\end{split}
\end{equation*}
%
thus proving the estimate \eqref{eq:e-4}.

\underline{Estimate on \eqref{eq:e-6}:}
Second, we show the estimate \eqref{eq:e-6}.
Observe that $\max\{|\zeta_j|\mid 1\le j\le 6\}\ll  N$, then $\Lambda_6(\zeta_1,\zeta_2,\zeta_3,\zeta_4,\zeta_5,\zeta_6)=0$.
Then we may suppose $\max\{|\zeta_j|\mid 1\le j\le 6\}\gtrsim N$.
Since $\zeta_1+\zeta_2+\zeta_3+\zeta_4+\zeta_5+\zeta_6=0$ and $\max\{|\zeta_j|\mid 1\le j\le 6\}\gtrsim N$, we have that at least two of $|\zeta_j|$ are greater than $cN$.
With Lemma \ref{lem:m}, we get
\begin{equation}\label{eq:estimate6}
|\Lambda_6(\zeta_1,\zeta_2,\zeta_3,\zeta_4,\zeta_5,\zeta_6)|\lesssim \frac{\max\{m(\zeta_j)^2\mid 1\le j\le 6\}}{\theta_0}.
\end{equation}

If at least three of  $|\zeta_j|$ are larger than $cN$, we assume $|\zeta_1|\gtrsim  |\zeta_2|\gtrsim |\zeta_3|\gtrsim N$ for simplicity.
We take the $L^4$-Strichartz estimate \eqref{eq:4-} to four functions $u_{N_j}$ for $1\le j\le 4$. 
Then by Sobolev's inequality $X^{1+,1/2+}\hookrightarrow L^{\infty}(\mathbb{R}\times \mathbb{R}\times \mathbb{T})$ to two functions $u_{N_j},~j=5,6$, we immediately have that the contribution of this case to the left-hand side of \eqref{eq:e-6} has the bound as
\begin{equation*}
\begin{split}
& c\sum_{N_1\gtrsim N_2\gtrsim N_3\gtrsim N+N_4+N_5+N_6}\frac{1}{\theta_0 N_1N_2N_3N_4}\|Iu_{N_1}\|_{X^{1,1/2+}}\|Iu_{N_2}\|_{X^{1,1/2+}}\prod_{j=3}^6\|u_{N_j}\|_{X^{1,1/2+}}\\
\lesssim & \frac{1}{N^{2-}}\|Iu\|_{X^{1,1/2+}}^6.
\end{split}
\end{equation*}

Let consider the case when two of $|\zeta_j|$ are larger than $cN$ and the other four $|\zeta_j|$ are much smaller than $N$.
Clearly,
\begin{equation}\label{eq:c1}
|\Lambda_4(\zeta_1,\zeta_2,\zeta_3,\zeta_4)|\le 1
\end{equation}
for $\max\{|\zeta_j|\mid 1\le j\le 4\}<N$.
By raising the estimate in Lemma \ref{lem:m},
\begin{equation}\label{eq;c2}
|\Lambda_4(\zeta_1,\zeta_2,\zeta_3,\zeta_4)|\lesssim \max\{m(\zeta_j)^2\mid 1\le j\le 4\},
\end{equation}
if $\min\{|\zeta_1|,|\zeta_3|\}\gtrsim N\gg \max\{|\zeta_2|,|\zeta_4|\}$ or $\max\{|\zeta_1|,|\zeta_3|\}\ll N\lesssim \min\{|\zeta_2|,|\zeta_4|\}$, since from
$$
\left| |\zeta_1|^2-|\zeta_2|^2+|\zeta_3|^2-|\zeta_4|^2  \right|\gtrsim \max\{|\zeta_j|^2\mid 1\le j\le 4\}.
$$
Also by Lemma \ref{lem:m1}, we see 
\begin{equation}\label{eq:c3}
\begin{split}
& |\Lambda_4(\zeta_1,\zeta_2,\zeta_3,\zeta_4)|\\
\lesssim &  \max\{m(\zeta_j)^2\mid 1\le j\le 4\}\left(1+\frac{\min\{|\zeta_1|,|\zeta_3|\}+\min\{|\zeta_2|,|\zeta_4|\}}{\left(\max\{|\zeta_1|,|\zeta_3|\}+\max\{|\zeta_2|,|\zeta_4|\}\right)\theta_0}\right)\\
\sim &  \max\{m(\zeta_j)^2\mid 1\le j\le 4\}\left(1+\min\{|\zeta_1|,|\zeta_3|\}+\min\{|\zeta_2|,|\zeta_4|\}\right)
\end{split}
\end{equation}
for $\max\{|\zeta_1|,|\zeta_3|\}\sim \max\{|\zeta_2|,|\zeta_4|\}\gtrsim N\gg \min\{|\zeta_1|,|\zeta_3|\}+\min\{|\zeta_2|,|\zeta_4|\}$.
Recall the previous consideration.
Suppose $|\zeta_1|\gtrsim  |\zeta_2|\gtrsim N\gg |\zeta_3|\gtrsim |\zeta_4|\gtrsim |\zeta_5|\gtrsim |\zeta_6|$ for simplicity.
Undoing the $L^4$-Strichartz estimate \eqref{eq:4-} to four functions $u_{N_j}$ for $1\le j\le 4$, Sobolev's inequality to two functions $u_{N_j},~j=5,6$, we immediately have that the contribution of this case to the left-hand side of \eqref{eq:e-6} has the bound as
\begin{equation*}
\begin{split}
& c\sum_{N_1\gtrsim N_2\gtrsim N\gg N_3\gtrsim N_4\gtrsim N_5\gtrsim N_6}\frac{N_3}{N_1N_2N_3N_4}\|Iu_{N_1}\|_{X^{1,1/2+}}\|Iu_{N_2}\|_{X^{1,1/2+}}\prod_{j=3}^6\|u_{N_j}\|_{X^{1,1/2+}}\\
\lesssim & \frac{1}{N^{2-}}\|Iu\|_{X^{1,1/2+}}^6.
\end{split}
\end{equation*}
In all other cases, the conclusion easily follows, thus proving our second estimate \eqref{eq:e-6}.
\end{proof}

\begin{remark}
If $m(\zeta)\equiv 1$ and $\theta_0(\zeta_1,\zeta_2,\zeta_3,\zeta_4) \equiv 0$, then $\widetilde{\Lambda}_4=\Lambda_6=0$ in formally, which implies $dE_I[u](t)/dt=0$.
Then $E_I$ is amended the energy conservation for high-frequency levels.
\end{remark}

Let us now in a position to prove Theorem \ref{thm:growth}.
We will obtain an a priori estimate using Lemma \ref{lem:error}, Theorems \ref{thm:LWP} and \ref{thm:energy-i}.

\begin{proof}[Proof of Theorem  \ref{thm:growth}]

Let $u(t)$ be global solution to \eqref{eq:NLS}.
Choose $T>0$ arbitrary.
Split the time interval $[0,T]$ by $[0,\delta_0]$, where $\delta_0=O(1)$ is obtained in Theorem \ref{thm:LWP}.
By Lemma \ref{lem:error}, Theorems \ref{thm:LWP} and \ref{thm:energy-i}, we have
$$
E_I[u](T)\lesssim 1
$$
provided if $T/N^{2-}\lesssim 1$. 
By \eqref{eq:HIH}
$$
\|u(T)\|_{H^s(\mathbb{R}\times\mathbb{T})}\lesssim N^{s-1}\|Iu(T)\|_{H^1(\mathbb{R}\times\mathbb{T})}\lesssim N^{s-1},
$$
hence proving
$$
\|u(T)\|_{H^s(\mathbb{R}\times\mathbb{T})}\lesssim \langle T\rangle^{\frac{s-1}{2}+}.
$$
The proof is complete.
\end{proof}

\end{document}